\documentclass[12pt,reqno]{article}
 \usepackage[body={6.5in,9.0in},left=1in,top=1in,centering]{geometry}
 
 \usepackage{amssymb,amsmath,amsthm, amsfonts,color,pstricks,mathrsfs,mathtools}
\usepackage[sort,comma]{natbib}
\bibliographystyle{apalike}
\usepackage[title]{appendix}

\newtheorem{thm}{Theorem}[section]
\newtheorem{cor}[thm]{Corollary}

\newtheorem{prop}[thm]{Proposition}
\newtheorem{lem}[thm]{Lemma}

\newtheorem{defn}[thm]{Definition}
\newtheorem{cnd}[thm]{Condition}

\newtheorem{rem}[thm]{Remark}

\newcommand{\thmref}[1]{Theorem~{\rm \ref{#1}}}
\newcommand{\lemref}[1]{Lemma~{\rm \ref{#1}}}
\newcommand{\corref}[1]{Corollary~{\rm \ref{#1}}}
\newcommand{\cndref}[1]{Condition~{\rm \ref{#1}}}
\newcommand{\propref}[1]{Proposition~{\rm \ref{#1}}}
\newcommand{\defref}[1]{Definition~{\rm \ref{#1}}}
\newcommand{\remref}[1]{Remark~{\rm \ref{#1}}}

\newcommand{\sectref}[1]{Section~{\rm \ref{#1}}}
\newcommand{\comment}[1]{}

\makeatletter \@addtoreset{equation}{section}

\allowdisplaybreaks

 \def\R{\ensuremath {\mathbb R}}
\newcommand{\EE}{\mathbb E}
\newcommand{\PP}{\mathbb P}
\newcommand{\QQ}{{\cal Q}}
\newcommand{\Q}{{\mathbb Q}}
\newcommand{\NN}{\mathbb N}
\newcommand{\RR}{\mathbb R}
\def\P{\ensuremath{\mathbb P}}

\newcommand{\F}{\mathcal F}\def\G{\mathcal G}
\newcommand{\FFF}{{\mathscr F}}

\newcommand{\B}{{\mathcal B}}
\newcommand{\E}{{\mathcal E}}
\newcommand{\Z}{{\mathcal Z}}

\renewcommand{\P}{{\mathcal P}}

\newcommand{\wdh}{\widehat}
\newcommand{\wdt}{\widetilde}
\newcommand{\ovl}{\overline}
\newcommand{\cadlag}{c\`{a}dl\`{a}g }

\newcommand{\ladlag}{l\`{a}dl\`{a}g }

\newcommand{\htau}{\wdh\tau}

\newcommand{\ttau}{\wdt\tau}
\newcommand{\mbf}{\mathbf}

\parskip=1pt

\title{On the Modelling of  Impulse Control with Random Effects for Continuous Markov Processes
}

\author{K.L. Helmes\thanks{ Institute for Operations Research, Humboldt University of Berlin, Spandauer Str. 1, 10178, Berlin, Germany, {\tt helmes@wiwi.hu-berlin.de}.}\and  R.H. Stockbridge\thanks{Department of Mathematical Sciences,   University of Wisconsin-Milwaukee,   Milwaukee, WI 53201,   USA,   {\tt stockbri@uwm.edu}; {\tt zhu@uwm.edu}.} \and C. Zhu\footnotemark[3]}

 \begin{document}
\maketitle

\begin{abstract}
The use of coordinate processes for the modelling of impulse control for {\em general}\/ Markov processes typically involves the construction of a probability measure on a countable product of copies of the path space.  In addition, admissibility of an impulse control policy requires that the random times of the interventions be stopping times with respect to different filtrations arising from the different component coordinate processes.  When the underlying Markov process has {\em continuous}\/ paths, however, a simpler model can be developed which takes the single path space as its probability space and uses the natural filtration with respect to which the intervention times must be stopping times.  Moreover, this model construction allows for impulse control with random effects whereby the decision maker selects a distribution of the new state.  This paper gives the construction of the probability measure on the path space for an admissible intervention policy subject to a randomized impulse mechanism.  In addition, a class of polices is defined for which the paths between interventions are independent and a further subclass for which the cycles following the initial cycle are identically distributed.  A benefit of this smaller subclass of policies is that one is allowed to use classical renewal arguments to analyze long-term average control problems.    Further, the paper defines a class of {\em stationary}\/ impulse policies for which the family of models gives a Markov family.  The decision to use an $(s,S)$ ordering policy in inventory management provides an example of an impulse policy for which the process   has i.i.d.~cycles and the family of models forms a Markov family.
  \vspace{2 mm}
	
\noindent
{\em MSC Classifications.}\/ 93E20, 60H30
\vspace{1 mm}

\noindent
{\em Key words.}\/ impulse control model, randomized impulse, random effects, Markov impulse policy, Markov family
\end{abstract}

\section{Introduction}
Impulse control was introduced by \cite{bens:75} and arises naturally in a wide variety of applications such as the management of inventory, exchange rates, and financial portfolios.  It occurs when the state of the system is adjusted in a discontinuous fashion and the cost includes a fixed positive charge for each such intervention. 

This paper examines the construction of the mathematical model for impulse control.  Intuitively, the state process evolves as a Markov process until the decision maker intervenes to instantly move the state to a new location (at a cost) after which the process again evolves as the Markov process starting from this new state.  The decision maker then waits to intervene again to instantly move the process at an additional cost and these actions continue into the infinite future.  The model as described can be clearly understood so some papers simply assume the existence of the impulse-controlled state process (see e.g., \cite{rich:77}, 
\cite{cade:00}, \cite{rung:18} and many others).  

For some applications one desires additional structure to the model such as independence of the evolutions of the process between interventions for some classes of intervention policies.  Intuitively it is again ``obvious'' that such a model is possible but it is challenging to move beyond the informal description of the process to a specific mathematical model; something that has been described as a ``hard problem'' (see \cite{mena:17}) for which the ``formal probabilistic apparatus \ldots is unfortunately rather cumbersome'' \cite[p.~227]{davi:93}.  

This paper constructs the mathematical model for impulse control of a process on the space of (augmented) \cadlag functions when there is uncertainty about the result of the intervention.  Due to these random effects, the filtration with respect to which the decisions are made is generated by the controlled process.  Beyond the construction of the model, three subclasses of policies are identified such that   (i) the controlled process between interventions form independent cycles, (ii) these independent cycles are identically distributed after the first intervention, and (iii) the family of models forms a Markov family.

A typical approach to defining the impulse-controlled process is to start by setting the sample space $\Omega$ to be the path space.  For example, \cite{harr:83} and \cite{orme:08} define the uncontrolled process to be a drifted Brownian motion in $\RR$ while \cite{bens:84} examines a more general strong solution to a stochastic differential equation in $\RR^n$.  Since the fundamental evolution of the process for both of these models is continuous, these papers set $\Omega=C_{\RR^n}[0,\infty)$ with $n=1$ in the first two papers.  The authors let $X$ denote the coordinate process, ${\cal F}=\sigma(X(t): t\geq 0)$ and use the natural filtration $\{{\cal F}_t\}$ in which ${\cal F}_t = \sigma(X(s): 0 \leq s \leq t)$.  The impulse policies consist of an increasing sequence of intervention times, say $\{\tau_k\}$, and a sequence of impulse random variables $\{\xi_k\}$.  Each intervention time $\tau_k$ must be an $\{{\cal F}_t\}$-stopping time and the corresponding impulse $\xi_k$ must be ${\cal F}_{\tau_k}$-measurable.  

This path space approach is also used by \cite{robi:78}, \cite{stet:83} and \cite{lepe:84} but with more complexity in that $\Omega=D_{\mathcal E}[0,\infty)$ so the coordinate process $X$ includes the possibility of the process dynamics having jumps.  The model, however, is built on the countable product $\wdt\Omega = \prod_{i=0}^\infty \Omega$ in which the different components are used for the evolution of the state process following the different interventions.  In each of these models, the intervention decisions are made subject to different filtrations $\{{\cal F}^n_t\}$, $n\in \NN_0:=\{0\}\cup\NN$, in which $\{{\cal F}^n_t\}$ is generated by the coordinate processes in the first $n$ components.  A small but significant difference between these constructions occurs at the times when $X$ jumps and the decision maker immediately intervenes; such a time is one of the intervention times $\tau_k$.  The models in  \cite{robi:78} and \cite{stet:83} define $X$ over the successive half-closed intervals $\{[\tau_k,\tau_{k+1}): k \in \NN_0\}$ resulting in \cadlag paths.  However, the actual value to which $X$ moves prior to the intervention which causes this intervention is never captured by observing the state process.  A difference between the models in \cite{robi:78} and \cite{stet:83} is that the latter paper explicitly considers the possibility of multiple interventions at the same time.  \cite{chri:14} adopts the construction of Stettner and tries to distinguish between the three locations $X_{\tau_n-}$, $X_{\tau_n,-}$ and $X_{\tau_n}$ where the first is the left limit of $X$ at the time $\tau_n$, the second gives the location of $X$ following the natural jump of the process, if any, at time $\tau_n$ but before the impulse is applied and the last expression gives the state of $X$ after the impulse takes effect.  Unfortunately, the model of \cite{stet:83} defines $X$ over the half-closed intervals $\{[\tau_k,\tau_{k+1}): k \in \NN_0\}$ so the position $X_{\tau_n,-}$ is never part of the information in the natural filtration.  

\cite{lepe:84} adjusts the model by having the impulse only take effect {\em after}\/ the time of intervention so the natural filtration of this state process observes $X$ after the jump but before the impulse.  The impulse control model in \cite{davi:93} is quite similar to that of \cite{lepe:84} but applies the construction to piecewise deterministic processes.  One unfortunate aspect of these latter, more precise models is that the paths of the impulse-controlled process will not be \cadlag at jump times of the process which immediately bring about interventions.  \cite{yush:83,yush:89} also constructs models for impulse control of piecewise deterministic processes which result in \cadlag processes.

Preceding the introduction of impulse control models, \cite{iked:66} used a similar construction to extend a killed Markov process to infinite time so that at each time of death the process is reinitialized and in doing so, the authors define a {\em single}\/ filtration with respect to which the rebirth times are stopping times and the resulting process is Markov.  This addresses the issue of having multiple filtrations of information, though the single filtration is quite complex.  Complete proofs of the construction are given in \cite{meye:75}.

An alternate approach for modelling the impulse-controlled process (see e.g., \cite{mena:80}, \cite{alva:04}, \cite{okse:05}, 
\cite{jack:06}, \cite{frey:10}, \cite{helm:15}) is to start with a given filtered probability space $(\Omega,{\cal F},\{{\cal F}_t\},\PP)$ on which the fundamental evolution of the process can be defined for each initial distribution.  The impulse-controlled process is then constructed iteratively over the successive intervals $\{[\tau_k,\tau_{k+1})\}$ by pasting together a shift of the fundamental process having the required different initial positions given by the impulses.  The impulse policy requires each intervention time to be an $\{{\cal F}_t\}$-stopping time.  Since $\{{\cal F}_t\}$ is assumed given, it is possible that more information than that generated by the state process is included in this filtration.

Another common approach to the modelling of impulse-controlled process is to simply refer to the constructions given in one of the aforementioned papers (see e.g., \cite{korn:97}, \cite{mena:17} and \cite{palc:17}).


This paper develops a simple model for the impulse control of Markov processes having {\em continuous} paths which provides extra properties of the process.  The path continuity implies that the left limit $X(\tau_k-)$ is always the state at which the impulse is applied which therefore has two important consequences.  First, when the impulse occurs {\em at} time $\tau_k$, the natural filtration includes the ``jump from state'' and, second, the resulting path is c\`{a}dl\`{a}g.  Both of these observations contrast with the models of \cite{lepe:84} and \cite{davi:93} when the underlying process has inherent discontinuities in its paths.  Their characteristic that the impulse takes effect immediately following $\tau_k$ is needed for the state from which the process jumps to be included in the natural filtration but, in general, leads to \ladlag paths.   

The simplicity of our model is that it is built using the coordinate process $X$ on the filtered space $(\Omega,{\cal F},\{{\cal F}_t\})$ in which $\Omega=D_\E[0,\infty)$ with $\E$ being the state space, ${\cal F}=\sigma(X(s): s \geq 0)$ and the filtration is the natural filtration generated by $X$ so ${\cal F}_t = \sigma(X(s): 0 \leq s \leq t\}$.  It is shown that, for each admissible impulse control policy $(\tau,Z)$ defined below and probability measure $\nu$ on $(\E,\B(\E))$, there exists a probability measure $\PP_{\nu}^{(\tau,Z)}$ on a simple augmentation of $(\Omega,{\cal F})$ under which $X$ is the desired impulse-controlled process.  The augmentation of $\Omega$ is to account for the possibility of an initial intervention at time $0$; see \sectref{math-model} for details of the augmentation.

In contrast with almost all papers on impulse control previously mentioned, a special feature of the model is the inclusion of uncertainty in the application of the impulses.  By this we mean that we select a control variable which, in conjunction with the state of the process at the instant before the intervention, determines a {\em distribution}\/ on the state space to which the impulse moves the process.  An example of such a random effect impulse arises in inventory management where an order is placed but only some random fraction of the ordered amount is delivered or perhaps the entire order is delivered but some random quantity must be discarded due to manufacturing defects.  A very similar model is studied in \cite{korn:97} for the case of a one-dimensional diffusion given as the solution to a stochastic differential equation.  Informally defining the model for the impulse-controlled state process, Korn refers to the construction of \cite{bens:84} and indicates that this can be adapted to allow a distribution for the post-impulse location.  Moreover, he assumes that the fundamental diffusion $X$ is defined on a given filtered probability space.  As indicated above, our model lives on the augmented $\Omega$ using the natural filtration of the coordinate process.

Motivated by consideration of a long-term average cost criterion, a second property of our model is that it results in the independence of the controlled process over the different intervals between interventions when the policy is from a subclass of admissible policies.  Further, all but possibly the first of these cycles will be identically distributed when the policy is in a further subclass.  In order to have this independence, we use a countable product space similar to \cite{robi:78}, \cite{stet:83}, \cite{bens:84} and \cite{lepe:84} but the impulse policy is initially defined on the augmented $(\Omega,{\cal F},\{{\cal F}_t\})$ and then carefully related to the product space.  In this manner a probability measure is established on the countable product space first and  the desired impulse-controlled process $\wdt{X}$ is defined similarly as in the other references.  However, since $\wdt{X}$ is c\`adl\`ag, its distribution on the augmented $(\Omega,{\cal F})$ then provides the desired measure $\PP_x^{(\tau,Z)}$.


The focus of all the impulse control papers we reviewed is on describing the controlled process and then analyzing problems of interest.  Those papers which detail the construction provide a model for the process, based on having a Markov or strong Markov process for the evolution between interventions, but they do not address whether the resulting controlled process is Markov or strong Markov.  It is somewhat obvious that a general result is not possible for {\em all}\/ policies since the admissible policies only require the intervention times to be stopping times and the intervention amounts to be measurable with respect to the associated stopped $\sigma$-algebras.  For example, the decision maker could define a policy that only depends on the initial position of the process.  The resulting controlled process cannot be Markov since the information about the initial position is not included in the $\sigma$-algebra generated by the state of the process at or just prior to an intervention time.    We identify a subclass of ``stationary'' policies for which the coordinate process, its natural filtration and the resulting family of measures forms a Markov family.  As pointed out to the authors by one of the referees, having a Markov family facilitates, for instance,  the verification of a transversality condition for infinite-horizon or ergodic functionals of the resulting time-homogeneous controlled processes.

The paper is organized as follows.  Section \ref{math-model} describes the fundamentals of the underlying Markov process, the distributions determined by the interventions which select the new states to which the process moves and defines the class of admissible nominal impulse policies.  Given such an admissible policy, the existence and uniqueness of the corresponding measure on the countable product of spaces is proven in Section \ref{sect-existence} and this is used as indicated above to obtain the measure on the space of trajectories.  Sufficient conditions on a policy for independence and identically distributed cycles are briefly presented in Section \ref{sect-ind-cycles}   while the subclass of stationary Markov policies resulting in a Markov family  is covered in  Section \ref{sect-Markov}.  The paper concludes with Section \ref{sect-example} by showing that the $(s,S)$ ordering policy of inventory management results in the controlled process being Markov and having iid cycles.

\section{Model Fundamentals} \label{math-model}

{\bf \em Process Dynamics.}\/ The model consists of dynamics which describe the evolution of the process in the absence of any interventions (as well as between the interventions).  Let $\E$ be a complete, separable metric space in which the process evolves.  Since the impulse-controlled process will have at most countably many discontinuities arising from the intervention decisions, we choose to describe the model using the space $\Omega:=D_\E[0,\infty)$ of \cadlag functions.  Let $X: \Omega\rightarrow D_\E[0,\infty)$ be the coordinate process such that $X(t,\omega) = \omega(t)$ for all $t\geq 0$, let ${\cal F} = \sigma(X(t): t\geq 0)$ and $\{{\cal F}_t\}$ be the natural filtration.   Furthermore let $\{\PP_x: x\in {\cal E}\}$ be a family of measures on $(\Omega,{\cal F})$ such that $(\Omega,{\cal F}, X,\{\F_{t}\},\{ \PP_{x},x\in \mathcal E\})$ is a Markov family in the sense of Definition 2.5.11 of \cite{kara:88}.  

We  assume throughout the paper that the family $\{\PP_x: x\in \E\}$ satisfies the following support condition:

\begin{cnd} \label{Markov-family}
For each $x\in \mathcal E$,   $\PP_x$ has its support in $C_\E[0,\infty)\subset \Omega$.  In other words, the coordinate process $X$ is continuous $\PP_x$-almost surely.  We call $X$ the fundamental Markov process.
\end{cnd}

\begin{rem}
 The construction of the model in \thmref{prob-construction} only uses the universal measurability condition of the Markov family requirement on the fundamental process $X$ to establish \thmref{prob-construction}(b).  The results in \thmref{prob-construction}(a) therefore apply to more general fundamental processes.  The full Markov family property is required, however, in our proofs of Theorems  \ref{Markov-family-thm} and   \ref{lem-family-equivalence}.  We choose to uniformly adopt the Markov family property of the fundamental process throughout the manuscript. 
\end{rem}

\noindent
{\bf \em Random Effect Impulse Mechanism.}  Let $({\cal Z},\mathfrak{Z})$ be a measurable space representing the impulse control decisions.
Let $\QQ=\{Q_{(y,z)}: (y,z) \in {\cal E}\times {\cal Z}\}$ be a given family of probability measures on ${\cal E}$ such that for each $\Gamma\in {\cal B}({\cal E})$, the mapping $(y,z) \mapsto Q_{(y,z)}(\Gamma)$ is ${\cal B}({\cal E})\otimes \mathfrak{Z}$-measurable.  

\noindent
{\bf \em Random Effect Impulse Mechanism: Special Case.} 
Let $\QQ=\{Q_{(y,z)}: (y,z) \in {\cal E}^2\}$ be a given family of probability measures on ${\cal E}$ such that for each $\Gamma\in {\cal B}({\cal E})$, the mapping $(y,z) \mapsto Q_{(y,z)}(\Gamma)$ is measurable with respect to ${\cal B}({\cal E}^2)$.  

The way to view the random effect impulses in the special case is that at a time when the decision maker intervenes, the controlled process is at $y$ and the aim is to instantly move the process to $z$ while the distribution of the actual position of the process following this intervention is given by $Q_{(y,z)}$.  We refer to $z$ as the nominal impulse since this is the place to which the process aims to jump by this intervention.  Motivated by the special case, we also use the moniker of ``nominal impulse'' to refer to the choice of impulse control $z \in {\cal Z}$ in general since it determines the distribution $Q_{(y,z)}$ of the new location of the process following the intervention.

We now define a nominal impulse policy.  In order to do so, we need to specify the filtration of information used by the decision maker to determine the nominal interventions.  Let $\{{\cal F}_{t-}\}$ be given by ${\cal F}_{t-} = \sigma(X(s): 0 \leq s < t)$ for $t > 0$ with $\F_{0-}=\{\emptyset,\Omega\}$.  It is also important to specify the $\sigma$-algebras of information available at and prior to a stopping time.  Let $\eta$ be an $\{\F_{t-}\}$-stopping time.  Then 
${\cal F}_{\eta} := \{A\in \F: A\cap \{\eta\leq t\}\in {\cal F}_{t-}, t\geq 0\}$ while 
${\cal F}_{\eta-} := \sigma(\{A\cap \{\eta > t\}:  A\in {\cal F}_{t-}, t\geq 0\})$. 

A little more care in the definition of the filtration is necessary for the first intervention.  Observe that every $\omega\in D_\E[0,\infty)$ is right continuous at $0$ which precludes the possibility of an intervention occurring at time $0$.  To allow the possibility of such an intervention which then moves $X$ to a new location $X(0)$, we need to {\em augment}\/ the space $D_\E[0,\infty)$ so that it contains the location from which the intervention occurs.  Define $\check{\Omega} = \E \times D_\E[0,\infty)$, set $\check\F=\B(\E)\otimes \F$ and denote elements $\check\omega\in \check\Omega$ by $\check\omega = (\check\omega(0-),\check\omega(\cdot))$.  Extend the coordinate process $X$ on $D_\E[0,\infty)$ to $\check\Omega$ by defining $X(0-,\check\omega) = \check\omega(0-)$ while keeping $X(s)=\check\omega(s)$ for $s\geq 0$.  Similarly extend the filtrations $\{\F_t\}$ and $\{\F_{t-}\}$ to $\{\check{\cal F}_t\}$ and $\{\check{\cal F}_{t-}\}$ by setting $\check{\cal F}_t = \B(\E)\otimes \F_t$ and $\check{\cal F}_{t-} = \B(\E)\otimes \F_{t-}$, respectively, for $t \geq 0$.  For each $x\in \E$, extend the measure $\PP_x$ on $(\Omega, \F)$ to a measure $\check\PP_x$ on $(\check\Omega, \check\F)$ by putting full mass on the subset $\{\check\omega \in \check\Omega: \check\omega(0-)=x\}$.  Finally, define the $\sigma$-algebra $\check\F_{\eta-} = \B(\E)\otimes \F_{\eta-}$.

We claim that $(\check\Omega,\check\F,X,\{\check\F_t\},\{\check\PP_x:x\in \E\})$ inherits the Markov family properties.  To see this, notice that for each $x\in \E$, $\check\PP_x\{\check\omega\in \check\Omega: \check\omega(0-)\neq \check\omega(0)\}=0$ by definition.  Intuitively, think of $X(0-)$ on $\check\Omega$ as being a ``left limit of $X$ at 0.''  Then the extension of $(\Omega,\F,\PP_x)$ to $(\check\Omega,\check\F,\check\PP_x)$ is defined so that $X$ is continuous at $0$, $\PP_x$-almost surely.  It then naturally  follows that $(\check\Omega,\check\F,X,\{\check\F_t\},\{\check\PP_x:x\in \E\})$ is a Markov family.  This observation will be used to establish a universal measurability property in the proof of \thmref{prob-construction} as well as  the Markov family property in  Theorems  \ref{Markov-family-thm} and \ref{lem-family-equivalence}.

\begin{defn}[Nominal Impulse Policy] \label{nominal-inventory-policy1}
A nominal impulse policy $(\tau,Z) \newline= \{(\tau_k,Z_k): k \in \NN\}$ is a sequence of pairs defined on $(\check\Omega,\check\F)$ in which: (a) the sequence $\{\tau_k\}$ satisfies (a,i) $\tau_1$ is an $\{\check\F_{t-}\}$-stopping time and for $k\geq 2$, $\tau_k$ is an $\{{\cal F}_{t-}\}$-stopping time; (a,ii) for each $k\in \NN$, on the set $\{\tau_k < \infty\}$, $\tau_{k+1} > \tau_k$; (a,iii) $\lim_{k\rightarrow \infty} \tau_k = \infty$; and (b) for each $k \in \NN$, $Z_k$ is a ${\cal Z}$-valued, ${\cal F}_{\tau_k-}/\mathfrak{Z}$-measurable random variable ($Z_1$ being $\check\F_{\tau_1-}/\mathfrak{Z}$-measurable).
\end{defn}

The subtle requirement that $\tau_1$ be an $\{\check\F_{t-}\}$-stopping time and for $k \geq 2$, each $\tau_k$ be a stopping time relative to $\{{\cal F}_{t-}\}$ prevents the decision maker from making a decision based on seeing the new location of $X$ at an intervention time.  
In addition we observe that ultimately each $Z_k$ will only be relevant on the set $\{\tau_k < \infty\}$ since no $k$th impulse action is taken on the set $\{\tau_k = \infty\}$.

An important distinction between our model and those previously mentioned in our literature review is that the nominal impulse policy is based on the filtration generated by the controlled process rather than the filtration generated by the fundamental process.  This use of the controlled process's augmented natural filtration is required due to the random effect of the impulses since later decisions need to be based on the realizations of the earlier impulses.  

 Our nominal impulse policy $(\tau,Z)$  does not allow  two or more simultaneous impulses.  This requirement imposes a restriction on the interplay between the random effect distribution $Q_{(y,z)}$ of one intervention and the activation of the next intervention.  Specifically, the support of $Q_{(y,z)}$ must be disjoint from the set of locations at which the next impulse is initiated. 

For many applications, the decision criterion penalizes simultaneous impulses through the imposition of a fixed positive cost for each distinct intervention so an optimal policy will separate the impulse times, provided the random effects distributions and interventions are well-behaved as in the paragraph above.  Thus, the requirement  that  the intervention times are strictly increasing is natural and not really a restriction in many applications.  
 
\begin{rem}
It is possible to introduce relaxed impulse controls in which the choice of nominal impulse at an intervention is itself random.  Thus at an intervention, a distribution $\pi$ on $Z$ is used, resulting in a random effects distribution 
$$Q_{(y,\pi)}(\cdot) = \int Q_{(y,z)}(\cdot)\, \pi(dz).$$  
Naturally, the sequence $\boldsymbol{\pi} = \{\pi_k\}$ needs to satisfy the measurability condition in \cndref{nominal-inventory-policy1}(b).  We choose to construct the model without this additional level of randomization.
\end{rem}

\section{Existence Result}\label{sect-existence}
Before constructing the model for the impulse-controlled process, it is helpful to state a set of necessary and sufficient conditions on a random time $\tau_k: \Omega \rightarrow [0,\infty]$ for it to be an $\{{\cal F}_{t-}\}$-stopping time.  These conditions play a subtle but important role in our construction of the measure on the countable product space of \cadlag paths.  A similar result was first obtained in \cite{galm:63} when $\tau_k$ is an optional time; that is, when $\{\tau_k < t\}$ is ${\cal F}_t$-measurable for each $t \geq 0$.  The analogous characterization of a random time $\tau_k$ being an $\{{\cal F}_t\}$-stopping time was obtained in Theorem~1.3 of \cite{cour:65}.  It also holds when $\tau_k$ is an $\{{\cal F}_{t-}\}$-stopping time. We state the result in the form we use; recall $\F_{0-}=\{\emptyset,\Omega\}$.  The theorem relies on the following equivalence relation between paths $\omega_1,\omega_2 \in \Omega$: for each $t \geq 0$, $\omega_2 \stackrel{R_t}{\sim} \omega_1$ holds if and only if $\omega_2(s) = \omega_1(s)$ for all $s < t$.

\begin{thm} \label{cp65}
For a mapping $\tau: \Omega \rightarrow [0,\infty]$ to be a stopping time with respect to $\{{\cal F}_{t-}\}$, it is necessary and sufficient that  (i) $\tau$ be ${\cal F}$-measurable and (ii) that for all $t\geq 0$, 
if $\omega_2\stackrel{R_t}{\sim}\omega_1$ and $\tau(\omega_1) \leq t $, then $ \tau(\omega_2)=\tau(\omega_1)$. 
\end{thm}

\begin{proof}
Theorem~1.3 of \cite{cour:65} has $\tau$ being an $\{{\cal F}_t\}$-stopping time with $R_t$ defined using $s\leq t$; its proof relies on the result in their Lemma~1.2.  These results remain valid using $\{\F_{t-}\}$ and $R_t$ defined above, with only minor changes to the proofs.  For example, their stopping operator $\alpha_t  \omega(s) := \omega_{s\wedge t}$ must be replaced by the left-limit $\alpha_t \omega(s) := \omega_{(s\wedge t)-}$. 
\end{proof}

\begin{rem} \label{cp65-tau1-mod}
Let $(\tau,Z)$ be a nominal impulse policy.  \thmref{cp65} holds for each $\{\F_{t-}\}$-stopping time $\tau_k$ for $k \geq 2$ since $\{\tau_k \leq 0\} = \emptyset \in \F_{0-}$.  The theorem can be modified for the stopping time $\tau_1$ by using $\{\check\F_{t-}\}$ and the relation between paths $\check\omega_1,\check\omega_2 \in \check\Omega$ defined by: $\check\omega_2 \stackrel{R_t}{\sim} \check\omega_1$ holds if and only if $\check\omega_2(0-)=\check\omega_1(0-)$ and $\check\omega_2(s) = \check\omega_1(s)$ for all $s < t$,  for each $t \geq 0$.  Recall that $\check\F_{0-}=\B(\E)\otimes \{\emptyset,\Omega\}$.
\end{rem}

The following corollary follows by selecting $t=\tau(\omega_1)$.

\begin{cor}[Corollary~1 of \cite{cour:65} modified] \label{corollary1}
If $\tau$ is an $\{{\cal F}_{t-}\}$-stopping time, the relation $\omega_2(s) = \omega_1(s)$ for all $s< \tau(\omega_1)$ implies that $\tau(\omega_2) = \tau(\omega_1)$. 
\end{cor}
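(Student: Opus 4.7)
The plan is to apply the preceding theorem directly with the choice $t:=\tau_k(\omega_1)$ indicated in the paper, supplemented by a separate treatment of the corner case $\tau_k(\omega_1)=\infty$ in which that substitution is not legal.

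When $\tau_k(\omega_1)<\infty$, I would set $t:=\tau_k(\omega_1)$. Then $\tau_k(\omega_1)\leq t$ holds trivially (with equality), and the hypothesis $\omega_2(s)=\omega_1(s)$ for all $s<\tau_k(\omega_1)$ becomes exactly the relation $\omega_2\stackrel{R^X_t}{\sim}\omega_1$. Since $\tau_k$ is an $\{{\cal F}_{t-}\}$-stopping time by assumption, the necessity part of the preceding theorem applies and yields $\tau_k(\omega_2)=\tau_k(\omega_1)$. When $\tau_k(\omega_1)=\infty$, the hypothesis forces $\omega_2(s)=\omega_1(s)$ for every $s\in[0,\infty)$; since paths in $\Omega=D_\E[0,\infty)$ are determined by their values at all nonnegative reals, we obtain $\omega_1=\omega_2$, and the conclusion $\tau_k(\omega_2)=\tau_k(\omega_1)$ is immediate.

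I do not anticipate any real obstacle: the corollary is essentially a repackaging of the path-consistency condition of the theorem under the specific substitution $t=\tau_k(\omega_1)$, with the only point requiring care being the separate handling of the set $\{\tau_k=\infty\}$, which is disposed of by the trivial observation that agreement of paths on all of $[0,\infty)$ means equality of paths.
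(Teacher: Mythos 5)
Your proof is correct and follows essentially the same route as the paper, which simply observes that the corollary follows from the preceding theorem by selecting $t=\tau_k(\omega_1)$. Your separate treatment of the case $\tau_k(\omega_1)=\infty$ is a small point of additional care that the paper leaves implicit, but it does not change the substance of the argument.
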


We now give the theorem which establishes the model for the random effect, impulse-controlled process corresponding to a nominal impulse policy.

\begin{thm} \label{prob-construction}
Let $(\tau,Z)$ be a nominal impulse policy.  For each $k\in \NN$, define the pre-impulse location $Y_k = X(\tau_k-)$ with the nominal impulse being $Z_k$ on the set $\{\tau_k < \infty\}$.  Set $\tau_0=0$.  Then there exists a family of probability measures $\{\PP_x^{(\tau,Z)}: x\in \E\}$ on $(\check\Omega,\check\F)$ under which the coordinate process $X$ satisfies the following properties:  
\begin{itemize}
\item[(a)] under $\PP^{(\tau,Z)}_x$ for each $x\in \E$, $X(0-)=x$ a.s.~and moreover,  for each $k\in \NN$, 
\begin{itemize} 
\item[(i)] $X$ is the fundamental Markov process on the interval $[\tau_{k-1},\tau_k)$;  
\item[(ii)]  on the set $\{\tau_k < \infty\}$, $Q_{(Y_k,Z_k)}$ is a regular conditional distribution of $X(\tau_k)$ given ${\cal F}_{\tau_k-}$; and
\end{itemize} 
\item[(b)] for each $F \in \check\F$, the mapping $x\mapsto \PP_x^{(\tau,Z)}(F)$ is universally measurable.
\end{itemize}
\end{thm}

\begin{proof}

Let $(\tau,Z)$ be a nominal impulse policy.  We build the model by iteratively adding random effect impulse interventions through transition functions.  This is facilitated by lifting the interventions and cycles to a product space.  To identify the components of the product space, define $(\Omega_0,\FFF_0) = (\check\Omega,\check\F)$ as well as the sequence of measurable spaces $\{(\Omega_k,\mathscr{F}_k): k\in \NN\}$ with each $(\Omega_k,\mathscr{F}_k) = (\Omega,\F)$.  Denote an element by $\omega_k \in \Omega_k$ for $k\in \NN_0$.  

We initially construct transitions from paths $(\omega_0,\ldots,\omega_k)$ to $\omega_{k+1}$.  Each additional intervention uses a transition function from $\QQ$ to pick an initial position $v_{k+1}$ for the new path $\omega_{k+1}$, with the measure $\PP_{v_{k+1}}$ then determining the probabilities on $\Omega_{k+1}$.  Thus the transitions from $(\omega_0,\ldots,\omega_k)$ to $\omega_{k+1}$ involve two types of transition functions.  For clarity, we refer to the selection of the initial position using a transition function but the transition to a new path is through a transition kernel.  Following an application of the Ionescu Tulcea extension theorem, we then focus on the initial position $x=\omega_0(0-)$.

Care needs to be taken when an intervention time is infinite for any component so that the transition kernel selects a path for the next component.  A fixed element $\ovl{e} \in {\cal E}$ will be the initial position for the new path and similarly, a fixed impulse control decision $\ovl{z}\in {\cal Z}$ will be the selected impulse for the next component, even though they will never be used  to define the impulse-controlled process.

\noindent
{\em First Intervention.}\/  To define the transition kernel $P_1:\Omega_0\times \mathscr{F}_1 \rightarrow [0,1]$, we begin by carefully defining the transition function $Q_1$ which selects the new initial position for the path $\omega_1\in\Omega_1$.  Set $X_0$ to be the (extended) coordinate process on $\Omega_0$.  Since $\Omega_0=\check\Omega$, define the mapping $T_0: \Omega_0 \rightarrow \check\Omega$ to be $T_0(\omega_0) = \omega_0$.  Now define the first intervention $(\wdt\tau_1,\wdt{Z}_1)$ on $\Omega_0$ by $\wdt\tau_1(\omega_0) := \tau_1(T_0(\omega_0)) = \tau_1(\omega_0)$ and $\wdt{Z}_1(\omega_0) := Z_1(T_0(\omega_0))I_{\{\omega_0\in \Omega_0: \ttau_1(\omega_0)<\infty \}} + \ovl{z} I_{\{\omega_0\in \Omega_0: \ttau_1(\omega_0)=\infty \}}.$   Using $\wdt\tau_1$ define the random variable $\wdt{Y}_1: \Omega_0 \rightarrow {\cal E}$ by $\wdt{Y}_1(\omega_0) := X_0(\wdt\tau_1(\omega_0)-,\omega_0) I_{\{\omega_0\in \Omega_0: \ttau_1(\omega_0)<\infty \}} + \ovl e I_{\{\omega_0\in \Omega_0: \ttau_1(\omega_0)=\infty \}}.$ 
As with $\wdt{Z}_1$, the definition of $\wdt{Y}_1$ on $\{\wdt\tau_1=\infty\}$ is merely for completeness.

Now define the transition function $Q_1: \Omega_0\times {\cal B}({\cal E}) \rightarrow [0,1]$ such that for each $\omega_0\in \Omega_0$ and $G\in {\cal B}({\cal E})$, 
\begin{equation} \label{Q1-def}
Q_1(\omega_0,G) =Q_{(\wdt{Y}_1(\omega_0),\wdt{Z}_1(\omega_0))}(G) I_{ \{\omega_0\in \Omega_0: \wdt\tau_1(\omega_0)<\infty\} } + \delta_{\{\ovl{e}\}}(G) I_{ \{\omega_0\in \Omega_0: \wdt\tau_1(\omega_0)=\infty\} }. 
\end{equation}
Notice that on the set $\{\wdt\tau_1<\infty\}$, for each $\omega_0$, $Q_{(\wdt{Y}_1(\omega_0),\wdt{Z}_1(\omega_0))} \in {\cal P}({\cal E})$ and for each $G \in {\cal B}({\cal E})$, the mapping $\omega_0 \mapsto Q_{(\wdt{Y}_1(\omega_0),\wdt{Z}_1(\omega_0))}(G)$ is $\mathscr{F}_0$-measurable so $Q_1$ is a transition function as claimed.   

Using $Q_1$ and $\PP_x$, the transition kernel $P_1:\Omega_0\times \mathscr{F}_1 \rightarrow [0,1]$ is defined by 
\begin{equation} \label{trans-fn-1}
 P_1(\omega_0,F_1) = \int_{\E} \PP_{v_1}(F_1)\, Q_1(\omega_0,dv_1), \qquad F_1\in \mathscr{F}_1.
\end{equation}

\noindent
{\em Second Intervention.}\/   Denote points in $\Omega_0\times \Omega_1$ by $\mathbf{e}_1=(\omega_0,\omega_1)$.  The first task is to extend the definition of the initial nominal impulse $(\wdt\tau_1,\wdt{Z}_1)$ to $\Omega_0\times \Omega_1$.  First, let $\pi_0: \Omega_0\times \Omega_1 \rightarrow \Omega_0$ be the projection mapping.  Next define $(\wdt\tau_1(\mathbf{e}_1),\wdt{Z}_1(\mathbf{e}_1)) = (\wdt\tau_1(\pi_0(\mathbf{e}_1)),\wdt{Z}_1(\pi_0(\mathbf{e}_1)))=(\wdt\tau_1(\omega_0),\wdt{Z}_1(\omega_0))$ and observe that the slight abuse of notation is reasonable since $(\wdt\tau_1,\wdt{Z}_1)$ is consistently defined on $\Omega_0$ and $\Omega_0\times \Omega_1$.  
Now let  $T_1: \Omega_0\times \Omega_1\rightarrow \Omega_0$ be the mapping such that $\ovl{\omega}_1(0-) = \omega_0(0-)$ and $ \ovl{\omega}_1(t):=T_1(\mathbf{e}_1)(t) = \omega_0(t) I_{\{ 0 \leq t < \wdt\tau_1(\omega_0)\}} + \omega_1(t-\wdt\tau_1(\omega_0))I_{\{t \geq \wdt\tau_1(\omega_0)\}}.$

We now define the random time $\wdt\tau_2:\Omega_0\times \Omega_1 \rightarrow \ovl{\RR}_+$ and nominal impulse $\wdt{Z}_2$ on $\Omega_0\times \Omega_1$.  First, define the projection operator $\wdt{T}: \Omega_0 \to \Omega$ so that $\wdt{T}(\omega_0(0-),\omega_0(\cdot)) = \omega_0(\cdot)$ for all $\omega_0\in \Omega_0$.  Now respectively define $\wdt\tau_2$ and $\wdt{Z}_2$ by 
\begin{equation} \label{wdt-tau2}
\wdt\tau_2(\mathbf{e}_1) = \tau_{2}(\wdt{T}(\ovl{\omega}_{1})) \quad \mbox{and} \quad 
\wdt{Z}_2(\mathbf{e}_1) = Z_{2}(\wdt{T}(\ovl{\omega}_{1})) I_{\{\wdt\tau_2(\mathbf{e}_1) < \infty \}} + \ovl z I_{\{\wdt\tau_2(\mathbf{e}_1) =  \infty \}}.
\end{equation}

We now make an important observation relating $\wdt\tau_1(\omega_0)$ and $\wdt\tau_1(\ovl\omega_1)$ since $\omega_0, \ovl\omega_1\in \check\Omega=\Omega_0$.  By definition $\ovl\omega_1(0-) = \omega_0(0-)$ and $\ovl\omega_1=\omega_0$ on $[0,\tau_1(\omega_0))$.  So since $\tau_1$ is an $\{\check\F_{t-}\}$-stopping time, \corref{corollary1} shows that $\tau_1(\ovl\omega_1)=\tau_1(\omega_0)$ and hence for each $\ovl{\mathbf{e}}_1=(\ovl\omega_1,\omega_1) \in \Omega_0\times \Omega_1$, 
$\wdt\tau_1(\ovl{\mathbf{e}}_1) = \wdt\tau_1(\omega_0).$
Thus this shift from using $\omega_0$ to $\ovl\omega_1$ to define the stopping time $\wdt\tau_2$ does not affect the value of the stopping time $\wdt\tau_1$.  

Using the coordinate process $X_1$ in the component space $\Omega_1$, define the mapping $\wdt{Y}_2: \Omega_0\times \Omega_1 \rightarrow {\cal E}$ by 
\begin{equation} \label{wdt-Y2}
\wdt{Y}_2(\mathbf{e}_1) = X_1(\,(\wdt\tau_2(\mathbf{e}_1)-\wdt\tau_1(\mathbf{e}_1))-\,,\omega_1) I_{ \{\wdt\tau_2(\mathbf{e}_1) < \infty\}} + \ovl e I_{ \{\wdt\tau_2(\mathbf{e}_1) = \infty\}}.
\end{equation}
In particular, on the set $\{\wdt\tau_2<\infty\}$ we observe that $\wdt{Y}_2\in {\cal E}$ corresponds to the value given by the pre-intervention location $\ovl{\omega}_1(\tau_2-)\in {\cal E}$ while $\wdt{Z}_2\in {\cal Z}$ corresponds to the nominal impulse control $Z_2(\wdt T(\ovl \omega_{1}))\in {\cal Z}$.  

Similarly as in \eqref{Q1-def}, define the transition function $Q_2: (\Omega_0\times \Omega_1)\times {\cal B}({\cal E}) \rightarrow [0,1]$ such that for each $\mathbf{e}_1\in \Omega_0\times \Omega_1$ and $G\in {\cal B}({\cal E})$,
\begin{equation} \label{Q2}
Q_2(\mathbf{e}_1,G) =Q_{(\wdt{Y}_2(\mathbf{e}_1),\wdt{Z}_2(\mathbf{e}_1))}(G)I_{ \{\mathbf{e}_1: \wdt\tau_2(\mathbf{e}_1)<\infty\}} + \delta_{\{\ovl{e}\}}(G) I_{ \{\mathbf{e}_1: \wdt\tau_2(\mathbf{e}_1)=\infty\}}. 
\end{equation}
The transition kernel $P_2:(\Omega_0\times \Omega_1)\times \mathscr{F}_2 \rightarrow [0,1]$ is specified by 
$$ \vspace{-1pt}  P_2(\mathbf{e}_1,F_2) = \int_\E \PP_{v_2}(F_2)\, Q_2(\mathbf{e}_1,dv_2), \qquad \mathbf{e}_1\in \Omega_0\times \Omega_1, F_2\in \mathscr{F}_2. $$

\noindent
{\em Induction Step.}\/  We now summarize the definition of the transition kernel $P_{k+1}:\prod_{i=0}^k \Omega_i \times \mathscr{F}_{k+1} \rightarrow [0,1]$ for $k \geq 1$.  To simplify notation, set $\mbf{e}_0 := \omega_0$ and for $k\in \NN$ let $\mathbf{e}_k=(\omega_0,\omega_1,\ldots,\omega_k) \in \prod_{i=0}^k \Omega_i$ and set $\pi_{k-1}:\prod_{i=0}^k \Omega_i \rightarrow \prod_{i=0}^{k-1} \Omega_i$ to be the projection mapping.  Similarly as in the second intervention, for $i=1,\ldots,k$, extend the definitions of the nominal impulses $(\wdt\tau_i,\wdt{Z}_i)$ to $\prod_{i=0}^k \Omega_i$ by setting $(\wdt\tau_i(\mathbf{e}_k),\wdt{Z}_i(\mathbf{e}_k)) = (\wdt\tau_i(\pi_{k-1}(\mathbf{e}_k)),\wdt{Z}_i(\pi_{k-1}(\mathbf{e}_k)))$.  Also, define $\wdt\tau_0 = 0$.

Now let $T_k: \prod_{i=0}^k \Omega_i \rightarrow \check\Omega$ be the mapping defined by $\ovl\omega_k(0-) = \omega_0(0-)$ and 
\setlength{\arraycolsep}{0.5mm}
\begin{equation} \label{partially-pasted-path} 
\begin{array}{rcl}
\ovl{\omega}_k(t) := T_k(\mathbf{e}_k)(t) &=& \left\{\begin{array}{ll}
\omega_i(t-\wdt\tau_i(\mathbf{e}_k)), & \; \wdt\tau_i(\mathbf{e}_k) \leq t < \wdt\tau_{i+1}(\mathbf{e}_k), \; i\leq k-1, \\
\omega_k(t-\wdt\tau_k(\mathbf{e}_k)), & \; t \geq \wdt\tau_k(\mathbf{e}_k).
\end{array}\right.
\end{array}
\end{equation}
As before by \corref{corollary1}, shifting to $\ovl\omega_k$ for the definition of $\wdt\tau_{k+1}$ below does not affect the previous intervention times.

Next define the nominal impulse $(\wdt\tau_{k+1},\wdt{Z}_{k+1})$ on $\prod_{i=0}^k \Omega_i$ similarly as in \eqref{wdt-tau2} and the mapping $\wdt{Y}_{k+1}: \prod_{i=0}^k \Omega_i \rightarrow \E$ using the coordinate process $X_k$ similarly as in \eqref{wdt-Y2} so that on the set $\{\wdt\tau_{k+1} < \infty\}$, $\wdt{Y}_{k+1}\in \E$ corresponds to $\ovl\omega_k(\tau_{k+1}-)\in {\cal E}$ and $\wdt{Z}_{k+1}\in {\cal Z}$ corresponds to $Z_{k+1}(\wdt T(\ovl\omega_k))\in {\cal Z}$.  Define the transition function $Q_{k+1}: (\prod_{j=0}^k \Omega_j )\times {\cal B}({\cal E}) \rightarrow [0,1]$ similarly to \eqref{Q2} in order to define the transition kernel $P_{k+1}: (\prod_{j=0}^k \Omega_j) \times \mathscr{F}_{k+1} \rightarrow [0,1]$ for $\mathbf{e}_k \in \prod_{j=0}^k \Omega_j$ and  $F_{k+1}\in \mathscr{F}_{k+1}$ by 
\vspace{-2pt}\begin{equation} \label{trans-fn-k}
P_{k+1}(\mathbf{e}_k,F_{k+1}) = \int_\E \PP_{v_{k+1}}(F_{k+1})\, Q_{k+1}(\mathbf{e}_k,dv_{k+1}).
\end{equation}

\vspace{-4pt}
This inductive process defines a sequence of transition kernels $\{P_k; k\in \NN\}$ which meets the conditions of the Ionescu Tulcea Extension Theorem (see Proposition V.1.1 (page 162) of \cite{neve:65}). 
 As a result, for each $\omega_0\in \Omega_0$ there exists a unique measure $\wdt\PP^{(\ttau,\wdt{Z})}_{\omega_0}$ on $(\wdt\Omega,\wdt{\cal G}):=(\prod_{j=0}^\infty \Omega_j,\otimes_{j=0}^\infty \FFF_j)$ whose value for every measurable rectangle $\prod_{k=0}^N F_k$ with $N\in \NN_0$ is \vspace{-6pt}
 \begin{displaymath} \vspace{-4pt} 
\wdt\PP^{(\ttau,\wdt{Z})}_{\omega_0}(\prod_{k=0}^N F_k)   = I_{F_0}(\omega_0) \int_{F_1} \int_{F_2}\cdots\int_{F_N} P_{N}(\mathbf{e}_{N-1},d\omega_N)\ldots P_2(\mathbf{e}_1,d\omega_2)\,  P_1(\omega_0,d\omega_1) 
\vspace{-3pt} \end{displaymath}
and such that for every nonnegative random variable $Y$ on $(\wdt\Omega,\wdt{\cal G})$, the expression
$\int_{\wdt\Omega} Y(\wdt\omega)\, \wdt\PP^{(\ttau,\wdt{Z})}_{\omega_0}(d\wdt\omega)$
is an $\mathscr{F}_0$-measurable function of $\omega_0$.

Now recall the extension of $\PP_x$ on $(\Omega,\F)$ to $\check\PP_x$ on $(\check\Omega,\check\F)$ and that $(\Omega_0,\mathscr{F}_0) = (\check\Omega,\check\F)$.  Define a family of measures $\{\wdt\PP^{(\ttau,\wdt{Z})}_x: x \in \E\}$ such that 
$$\wdt\PP^{(\ttau,\wdt{Z})}_x(G) = \int_{\Omega_0} \wdt\PP^{(\ttau,\wdt{Z})}_{\omega_0}(G)\, \check\PP_x(d\omega_0), \qquad \forall G\in \wdt{\cal G}.$$ 
We determine the measurability of the mapping $x\mapsto \wdt\PP^{(\ttau,\wdt{Z})}_x(G)$ for each $G \in \wdt{\cal G}$.  Observe that for each $F_0\in \mathscr{F}_0$, $\wdt\PP^{(\ttau,\wdt{Z})}_x(F_0) = \check\PP_x(F_0)$ so since $(\Omega_0,\mathscr{F}_0,X_0,\{\check\F_t\},\{\check\PP_x: x\in \E\})$ is a Markov family, $\wdt\PP^{(\ttau,\wdt{Z})}_x(F_0)$ is a universally measurable function of $x$.  It then follows that for any simple function $X(\omega_0) := \sum_{j=1}^n c_j I_{F_j}(\omega_0)$, the function $x\mapsto \sum_{j=1}^n c_j \wdt\PP^{(\ttau,\wdt{Z})}_x(F_j) = \EE^{\wdt\PP^{(\ttau,\wdt{Z})}_x}[X]$ is universally measurable.  This, in turn, implies that the mapping $x\mapsto \EE^{\wdt\PP^{(\ttau,\wdt{Z})}_x}[X]$ is universally measurable for any any nonnegative random variable $X$ on $(\Omega_0,\mathscr{F}_0,\check\PP_x)$. Therefore 
 the function $x\mapsto \wdt\PP^{(\ttau,\wdt{Z})}_x(G)$ is universally measurable since $\wdt\PP^{(\ttau,\wdt{Z})}_{\omega_0}(G)$ is an $\mathscr{F}_0$-measurable random variable.

For any nominal impulse policy $(\tau,Z)$ on $(\Omega,\F)$, the above construction defines a measure $\wdt\PP^{(\ttau,\wdt{Z})}_x$ on $(\wdt\Omega,\wdt{\cal G})$ for which each component coordinate process $X_k$ is the fundamental Markov process, $X_0(0-) = x$ a.s.~and for $k\geq 1$, conditional on ${\cal F}_{\tau_k-}$, $X_k(0)$ has distribution $Q_{(\wdt{Y}_k,\wdt{Z}_k)}$\; $\wdt\PP^{(\ttau,\wdt{Z})}_x$-almost surely.  Notice that the random times $\wdt\tau_k$ are well-defined on $\wdt\Omega$ using the projection of $\wdt\Omega$ onto $\prod_{j=0}^{k-1} \Omega_j$ for each $k\in \NN$.  This probability space can be used to define the random effect impulse controlled process by pasting together the cycles from each component.  Set  $\wdt\tau_0=0$ and define the process $\wdt{X}$ on $(\wdt\Omega,\wdt{\cal G},\wdt\PP^{(\ttau,\wdt{Z})}_x)$ by
\begin{equation} \label{inv-proc-def}
\wdt{X}(t) = X_k(t-\wdt\tau_k) \quad \mbox{for } \wdt\tau_k \leq t < \wdt\tau_{k+1}, \qquad k = 0, 1, 2, \ldots.
\end{equation}
Also define $\wdt{X}(0-,\wdt\omega) = \omega_0(0-)$ for all $\wdt\omega\in \wdt\Omega$.  Since $\wdt{X}$ has right continuous paths and is adapted to the filtration it generates, $\wdt{X}$ is progressively measurable and hence is a measurable mapping of $\wdt\Omega$ into $\check\Omega$.  

Finally, since $\wdt{X}: \wdt\Omega \rightarrow \check\Omega$, define $\PP_x^{(\tau,Z)} := \wdt\PP^{(\ttau,\wdt{Z})}_x \wdt{X}^{-1}$ to be the distribution of $\wdt{X}$ on $\check\Omega$.  
Of particular note is the fact that the construction of the nominal interventions $(\wdt\tau,\wdt{Z})=\{(\wdt\tau_k,\wdt{Z}_k): k\in \NN\}$ is such that they correspond to the original nominal impulse policy $(\tau,Z)=\{\tau_k,Z_k): k\in \NN\}$.  Observe also that $\PP_{\cdot}^{(\tau,Z)}(F)$ as a function of the initial position is universally measurable for each $F\in \F$.  The desired model for the state process under the nominal impulse control policy $(\tau,Z)$ is therefore given by the coordinate process $X$ on $(\check\Omega, \check\F,\PP_x^{(\tau,Z)})$.

  We conclude the proof by showing that on the set $\{\tau_k < \infty\}$,  $k\in \NN$, the transition function $Q_{(Y_k,Z_k)}$ is a regular conditional distribution of $X(\tau_k)$ given $\F_{\tau_k-}$ under the measure $\PP^{(\tau,Z)}_x$; recall  $Y_k = X(\tau_k-)$. In view of the definition of $Q_{(y,z)}$, we only need to show that $\PP^{(\tau,Z)}_x (  \{X(\tau_k)\in B\}|\F_{\tau_{k}-})(\omega) = Q_{(Y_k(\omega),Z_k(\omega))}(B) $   for $\PP^{(\tau,Z)}_x $-almost every $\omega$ and each $B\in \B(\E)$. Note that   on the set $\{\wdt\tau_{k} < \infty \}$, we have $\{\wdt X(\wdt\tau_{k}) \in B\} = \{ X_{k}(0) \in B\} \in \mathscr F_{k}$. Thus 
  \vspace{-4pt}
\begin{displaymath} P_{k}(\mbf e_{k-1}, \{\wdt X(\wdt\tau_{k}) \in B\} )= \int_{\mathcal E} \PP_{v_{k}}  \{ X_{k}(0) \in B\} Q_{k}(\mbf e_{k-1}, dv_{k}) 
= Q_{(\wdt Y(\mbf e_{k-1}), \wdt Z(\mbf e_{k-1}))}(B).\vspace{-4pt}
\end{displaymath}  
Let $A \in \F_{\tau_k-}$ and define $\wdt{A} = \wdt{X}^{-1}(A) \in \otimes_{i=0}^{k-1} \mathscr F_i$. Then, using $\pmb \int_{{\!\!\mathmakebox[1em][l]{\wdt A} }}   : = \idotsint_{\wdt{A}} $,   
 \begin{align*}
& \PP^{(\tau,Z)}_x (A\cap \{X(\tau_k)\in B\}) = \wdt\PP^{(\ttau,\wdt{Z})}_x(\wdt{A} \cap \{\wdt{X}(\wdt\tau_{k})\in B\}) \\[-2pt]
 & =
{\pmb \int}_{{\!\!\!\mathmakebox[1em][l]{\wdt A} }}  P_{k}(\mbf e_{k-1}, \{\wdt X(\wdt\tau_{k}) \in B\} )\, P_{k-1}(\mbf{e}_{k-2},d\omega_{k-1})\cdots P_1(\omega_0,d\omega_1)\, \PP_x(d\omega_0) \\[-1.0pt]
& = {\pmb \int}_{{\!\!\!\mathmakebox[1em][l]{\wdt A} }}  Q_{(\wdt Y(\mbf e_{k-1}), \wdt Z(\mbf e_{k-1}))}(B)\, P_{k-1}(\mbf{e}_{k-2},d\omega_{k-1})\cdots P_1(\omega_0,d\omega_1)\, \PP_x(d\omega_0) \\[-1.0pt]
& = {\pmb \int}_{{\!\!\!\mathmakebox[1em][l]{\wdt A} }}  Q_{(\wdt Y(\mbf e_{k-1}), \wdt Z(\mbf e_{k-1}))}(B)\, \wdt\PP^{(\ttau,\wdt{Z})}_x(d\wdt\omega) 
=\int_A Q_{(Y_k(\omega),Z_k(\omega))}(B)\, \PP^{(\tau,Z)}_x(d\omega),
\end{align*} 
establishing that $Q_{(Y_k,Z_k)}$ is a regular conditional distribution of $X(\tau_k)$ given $\F_{\tau_k-}$.
\end{proof}

\begin{cor} \label{prob-construction-nu}
Let $(\tau,Z)$ be a nominal impulse policy with corresponding family of probability spaces $\{(\check\Omega,\check\F,\PP^{(\tau,Z)}_x):x\in \E\}$ and let $\nu \in {\cal P}(\E)$.  Define $\PP^{(\tau,Z)}_\nu$ by $\PP^{(\tau,Z)}_\nu(F) = \int_\E \PP^{(\tau,Z)}_x(F)\, \nu(dx)$ for each $F\in \check\F$.  Then the coordinate process $X$ on $(\check\Omega,\check\F,\PP^{(\tau,Z)}_\nu)$ satisfies \thmref{prob-construction}(a) but with $X(0-)$ having distribution $\nu$.
\end{cor}

\begin{rem}
The construction of the models in \thmref{prob-construction} and \corref{prob-construction-nu} can be extended to policies $(\tau,Z)$ for which the intervention times $\{\tau_k\}$ are stopping times with respect to the universal completion of the filtration $\{\F_{t-}\}$.  Only minor adjustments are required in the proofs of this and the next section.  The adaptation of the Markov results in \sectref{sect-Markov} would require more effort so this extension is not pursued.
\end{rem}

 We next identify a class of nominal impulse policies such that the controlled process $X$ has independent cycles.  

\section{Policies with Independent Cycles}\label{sect-ind-cycles}
\thmref{prob-construction} establishes the existence of a family of measures $\{\PP^{(\tau,Z)}_x: x\in \E\}$ on $(\check\Omega,\check\F)$ by first defining a sequence of transition kernels $\{P_{k+1}: k\in\NN_0\}$ in \eqref{trans-fn-k} on the product space $(\wdt\Omega,\wdt{\cal G})$ which sequentially select the next cycle.  These transition kernels in turn depend on random effects transition functions $\{Q_{k+1}: k\in \NN_0\}$ with  
\begin{equation} \label{Q-k1-def}
Q_{k+1}(\mbf{e}_k,\cdot) = I_{\{\ttau_{k+1}(\mbf{e}_k)<\infty\}} Q_{(\wdt{Y}_{k+1}(\mbf{e}_k),\wdt{Z}_{k+1}(\mbf{e}_k))}(\cdot) + I_{\{\ttau_{k+1}(\mbf{e}_k)=\infty\}} \delta_{\ovl{e}}(\cdot).
\end{equation}
Recall at the end of the proof of \thmref{prob-construction} it was observed that the $k$th random effect distribution is given by $Q_{(Y_{k},Z_k)}$, in which  $Y_{k} = X(\tau_k-)$.  This structure enables a class of policies to be identified for which the controlled process $X$ has independent cycles; that is, for which $\{X(t): 0 \leq t < \tau_1\}$, $\{X(t): \tau_1 \leq t < \tau_2\}$, \ldots, are independent stochastic processes.

To properly define the subclass of  nominal policies for independent cycles, we need a subtle adjustment to the standard shift operator to account for the augmented path space $\check\Omega$.  Recall that $\check\omega\in \check\Omega$ has elements $\check\omega=(\check\omega(0-),\check\omega(\cdot))$. 
For $t > 0$, let $\theta_t:\check\Omega \rightarrow \Omega$ denote the shift operator such that $\theta_t(\check\omega) = \check\omega(t+\cdot)$ for all $\check\omega \in \check\Omega$; note this operator cuts off the path $\{\check\omega(r): 0 \leq r < t\}$ as well as $\check\omega(0-)$ from $\check\omega$.  
For $t=0$, $\theta_t$ is the identity operator on $\check\Omega$.  Similarly, for a stopping time $\eta$, define the random shift operator $\theta_{\eta}:\check\Omega\rightarrow \Omega$ by $\check\omega(s) \mapsto \check\omega(\eta(\check\omega)+s)$ for $s\geq 0$ on the set $\{\eta < \infty\}$; the random shift operator is undefined on $\{\eta=\infty\}$.   

Since we set $\tau_0=0$, it will be helpful to define   the special symbol $\theta_{\tau_0} $ to represent  the identity mapping on $ \check\Omega$ so that  single relations can be used for every $k\in \NN_{0}$ in Definitions \ref{independent-cycles-nominal-impulse-policy} and \ref{Markov-nominal-impulse-policy}.  The symbol $\theta_{\tau_0} $  indicates the absence of any shift.  Note that, in contrast with $\theta_\eta$ for a general stopping time $\eta$, the augmented value $\check\omega(0-)$ is retained by $\theta_{\tau_0}(\check\omega)$.  Now for $r\geq 0$ introduce the extended shift operator $\check\theta_r: \check\Omega \to \check\Omega$ with $\check\theta_r(\check\omega) = (\check\omega(r-),\check\omega(r+\cdot))$ and define the symbol $\theta_{\tau_0+r} := \check\theta_r\circ\theta_{\tau_0}$ for any $r \ge 0$.   

\begin{defn}[Independent Cycles Nominal Impulse Policy] \label{independent-cycles-nominal-impulse-policy}
Set $\tau_0=0$.  An independent-cycles nominal impulse policy is a nominal impulse policy $(\tau,Z) $ 
 for which for each $k\in \NN$: \begin{itemize} \item[(a)] there exists a random time $\sigma_{k}$ such that $\tau_{k} = \tau_{k-1} + \sigma_{k}\circ\theta_{\tau_{k-1}}$ on the set $\{\tau_{k-1} < \infty\}$; and \item[(b)] on the event $\{\tau_k < \infty\}$, the intervention $(\tau_{k},Z_{k})$ is such that $Q_{(Y_{k}(\omega),Z_k(\omega))}$ does not depend on $\omega$.\end{itemize}
\end{defn}

Policies $(\tau,Z)$ satisfying \defref{independent-cycles-nominal-impulse-policy} have two common features.  First the decision when to intervene is only dependent on the current cycle.  Also, the random effects distributions for the interventions are deterministic in the sense that, for the $k$th intervention, the impulse $Z_{k}(\omega)$ is constant as a function of $\omega$ and either the random effects distribution $Q_{(Y_{k}(\omega),Z_k(\omega))}$ is independent of the intervention location $Y_k=X(\tau_{k}-)$ or the intervention occurs only when each path hits the same state.


The following proposition establishes the independence of the cycles for $(\tau,Z)$ satisfying \defref{independent-cycles-nominal-impulse-policy}. 

\begin{prop}\label{prop6.2}
Let $\nu\in {\cal P}(\E)$, $(\tau,Z)$ be an independent cycles nominal impulse policy and let $(\check\Omega,\check\F,\PP^{(\tau,Z)}_\nu)$ and $X$ be the probability space and coordinate process of \corref{prob-construction-nu}, respectively.  Then the cycles $\{X(t): \tau_k \leq t < \tau_{k+1}\}$ for $k\in \NN_0$ are independent.
\end{prop}

\begin{proof}
 Let $x\in \E$ be chosen arbitrarily and let $(\tau,Z)$ satisfy \defref{independent-cycles-nominal-impulse-policy}.  It suffices to prove the result for $\PP^{(\tau,Z)}_x$.  The first goal is to establish that each measure in the family $\{\wdt\PP^{(\htau,\wdh{Z})}_x: x\in \E\}$ from the proof of \thmref{prob-construction} is a product measure on $(\wdt\Omega,\wdt\G)$.  As a consequence, the component paths used to define the cycles of $\wdt{X}$ in \eqref{inv-proc-def} are independent processes.  

One   adjustment to the construction of $\wdt\PP^{(\htau,\wdh{Z})}_x$ is helpful.  By \defref{independent-cycles-nominal-impulse-policy}(b), each $Q_{(X(\tau_{k+1}(\omega)-,\omega),Z_{k+1}(\omega))}$ is constant in $\omega$.  In \eqref{Q-k1-def}, the measure $\delta_{\ovl{e}}(\cdot)$ is arbitrarily chosen as the transition function to a new initial point $\ovl{e}$ for the next component path $\omega_{k+1}$ when $\ttau_k$ is infinite.  For the construction of $\wdt\PP^{(\htau,\wdh{Z})}_x$ in this section, replace $\delta_{\ovl{e}}$ by the fixed transition function $Q_{(X(\tau_{k+1}(\omega)-,\omega),Z_{k+1}(\omega))}$ so that  $Q_{k+1}(\mbf{e}_k,\cdot) = Q_{(\wdt{Y}_{k+1}(\mbf{e}_k),\wdt{Z}_{k+1}(\mbf{e}_k))}(\cdot) = Q_{k+1}(\cdot)$ does not depend on $\mbf{e}_k$.  Referencing \eqref{wdt-Y2} for the pattern of the definition of $\wdt{Y}_k$, since $\ttau_k(\mbf{e}_{k-1}) = \sum_{i=1}^k \sigma_i(\omega_{i-1})$,  it follows that $\wdt{Y}_k(\mbf{e}_{k-1}) = \wdt{Y}_k(\omega_{k-1})$. 
 
Consider $k=1$.  Again, let $\wdt\PP^{(1)}_x$ denote the marginal of $\wdt\PP^{(\htau,\wdh{Z})}_x$ on $(\Omega_0\times \Omega_1, \FFF_0\otimes \FFF_1)$.  Now arbitrarily pick $F_0\in \FFF_0$ and $F_1\in \FFF_1$.  Combining \defref{independent-cycles-nominal-impulse-policy}(b) with the definition of the transition kernel $P_1$ in \eqref{trans-fn-1}, we have
\setlength{\arraycolsep}{0.5mm}
\begin{eqnarray*}
\wdt\PP^{(\htau,\wdh{Z})}_x(F_0\times F_1) = \wdt\PP^{(1)}_x(F_0\times F_1) &=& \int_{F_0} \int_{\E_1} \PP_{v_1}(F_1)\, Q_{(\wdt{Y}_1(\omega_0),\wdt{Z}_1(\omega_0))}(dv_1)\, \PP_x(d\omega_0) \\
&=& \PP_x(F_0) \cdot \int_{\E_1} \PP_{v_1}(F_1)\, Q_1(dv_1)
\end{eqnarray*}
and it follows that $X_0$ and $X_1$ are independent.  An induction argument now establishes the claim that $\wdt\PP^{(\htau,\wdh{Z})}_x$ is a product measure on $(\wdt\Omega,\wdt\G)$ and hence all coordinate processes $\{X_k: k\in \NN_0\}$ are independent.  Again by \defref{independent-cycles-nominal-impulse-policy}(a) for each $k\in \NN$, $\sigma_k$ only depends on $\omega_{k-1}$ and so the snippets of the coordinate processes $\{X_0(t,\omega_0): 0\leq t < \sigma_1(\omega_0)\}$, $\{X_1(t,\omega_1): 0 \leq t < \sigma_2(\omega_1)\}$, $\{X_2(t,\omega_2): 0 \leq t < \sigma_3(\omega_2)\}$, \ldots are independent stochastic processes.  The result now follows from the definition of $\wdt{X}$ in \eqref{inv-proc-def} and its relation to $X$ defined on $(\Omega,\F,\PP^{(\tau,Z)}_x)$.
\end{proof}

An immediate consequence is that when the same rule for the random time $\sigma$ is used for each intervention $\tau_{k+1}$ in \defref{independent-cycles-nominal-impulse-policy} along with the same random effects distribution $Q$, the cycles following the first are iid.  

\begin{cor} \label{iid-cycles}
Let $\nu\in {\cal P}(\E)$, $(\tau,Z)$ be an independent cycles nominal impulse policy such that for each $k\in\NN$, $\sigma_{k}=\sigma$ and $Q_{k}(\cdot)=Q(\cdot)$ for some fixed random time $\sigma$ and distribution $Q$ on $\E$, respectively.  Let $(\check\Omega,\check\F,\PP^{(\tau,Z)}_\nu)$ and $X$ be the probability space and coordinate process of \corref{prob-construction-nu}, respectively.  Then the cycles $\{X(t): \tau_k \leq t < \tau_{k+1}\}$ for $k\in \NN_0$ are independent and the cycles for $k\in \NN$ are identically distributed.
\end{cor} 

A benefit of this result is that for $(\tau,Z)$ satisfying \corref{iid-cycles}, one is able to employ renewal arguments when solving random-effects impulse control problems having long-term average criteria.

\section{Stationary Markov Nominal Impulse Policies}\label{sect-Markov}

We now examine the Markov nature of the coordinate process $X$. 
 In Proposition~V.2.1 of \cite{neve:65} (see p.~168),  the Markov property is established for models in which the transition function only depends on the current state, not the entire past.  Intuitively, to mimic Proposition~V.2.1 of \cite{neve:65} to establish the Markov property of the process, at the minimum the intervention decision for our construction will need to be based solely on the process during the current cycle  similar to \defref{independent-cycles-nominal-impulse-policy}(a).  In fact, stronger conditions are required and we define a class of {\em stationary}\/ policies.

The idea of a stationary   policy is that the same rule for an intervention is used at each time.  Recall, the initial intervention is defined on the augmentated space $\check\Omega$ in order to allow $\tau_1=0$ whereas all later interventions only use $\Omega$.  
To simplify \defref{Markov-nominal-impulse-policy}(a), it is necessary to first project $\check\Omega$ to $\Omega$ before applying the cycle intervention time $\sigma$.  
Recall, the operator $\wdt{T}: \check\Omega \to \Omega$ has been defined so that for each $\check\omega=(\check\omega(0-),\check\omega(\cdot))$, $\wdt{T}(\check\omega) = \check\omega(\cdot)$; see the second intervention in the proof of \thmref{prob-construction}.  When $k=1$ in \defref{Markov-nominal-impulse-policy}(a), $\sigma$ is to be interpreted as the composition $\sigma\circ \wdt{T}$.


Furthermore, one of the challenges in the ensuing analysis 
is relating ``clock times'' to their corresponding times in whichever cycles they occur.  Throughout this section, $s$ and $t$ will represent clock times from the beginning whereas $u$ and $v$ will represent the time within the appropriate cycles.

\begin{defn}[Stationary Markov Nominal Impulse Policy] \label{Markov-nominal-impulse-policy}
A stationary Markov nominal impulse policy is a nominal impulse policy $(\tau,Z)= \{(\tau_k,Z_k): k\in \NN\}$ for which 
 there exist measurable functions $\sigma: \Omega \rightarrow (0,\infty]$ and $\mathfrak{z}:\E\rightarrow \Z$ such that: 
 \begin{itemize}
\item[(a)] for each $k \ge 1$,  $ \tau_k = \tau_{k-1} + \sigma \circ\theta_{\tau_{k-1}}$; 
 and on the event $\{\tau_{k-1}<\infty\}$, for each $u \geq 0$, 
\begin{equation} \label{markov-stopping-time-cnd}
\{\sigma\circ\theta_{\tau_{k-1}} > u\}  \ \subset \  \{ \sigma\circ\theta_{\tau_{k-1}} = u+\sigma\circ\theta_{\tau_{k-1}+u} \};
\end{equation}  
 and 
 \item[(b)] $Z_k = \mathfrak{z}(X(\tau_k-))$.
\end{itemize}
\end{defn}

The random time 
$ \sigma \circ\theta_{\tau_{k-1}} $ gives the length of the $k$th cycle.  The condition \eqref{markov-stopping-time-cnd} implies that $  \sigma \circ\theta_{\tau_{k-1}} $ has the property that if it exceeds time $u$, then it consists of the known elapsed cycle time $u$ plus the random time for the shifted path that only looks to the future.  This condition is a terminal time-like condition for the cycle length $  \sigma \circ\theta_{\tau_{k-1}} $ which is inspired by the definition of a terminal time for general Markov processes given by \cite{meye:75} and \cite{chun:05}.  \defref{Markov-nominal-impulse-policy}(b) requires the   nominal impulse $Z_{k}$ to only depend on the position of the process just prior to the intervention through the given function   $\mathfrak z$.

The intuition underlying a stationary Markov policy is that the same policy should be applied going forward from each time $s\geq 0$, which we   establish  in \propref{stationary-prop} below.

\begin{prop} \label{stationary-prop}
Let $(\tau,Z)$ be a stationary Markov nominal impulse policy.  For each $s \geq 0$ and $k\in \NN$, let $(\tau_k^{(s)},Z_k^{(s)})$ denote the $k$th intervention  following time $s$: 
\begin{displaymath}(\tau^{(s)}_k(\check\omega),Z^{(s)}_k(\check\omega)) = (\tau_{j+k}(\check\omega)-s,Z_{j+k}(\check\omega)),  \quad \mbox{for } \check\omega\in \{\tau_j \leq s < \tau_{j+1}\}, \quad j\in \NN_0.\end{displaymath}
For each $\check\omega \in \check\Omega$, define $\check\omega^{(s)} = \theta_s(\check\omega)$ to be the shifted path.  Then for each $k \in \NN$, 
\begin{displaymath} 
\tau_k^{(s)}(\check\omega) = \tau_k(\check\omega^{(s)}),  \quad \mbox{and} \quad Z_k^{(s)}(\check\omega) = Z_k(\check\omega^{(s)}), 
 \qquad \forall\, \check\omega \in \check\Omega; \end{displaymath}
that is, the $k$th intervention of the policy $(\tau,Z)$ after time $s$ for each $\check\omega \in \check\Omega$ is exactly the same as the $k$th intervention of $(\tau,Z)$ for the shifted path $\check\omega^{(s)}$; in particular, $\tau^{(s)}_k$ does not depend on the number $j$ of interventions up to time $s$.
\end{prop}

\begin{proof}
Arbitrarily fix $s \geq 0$.  Observe that the collection of sets $\{\tau_j \leq s < \tau_{j+1}\}$, $j\in \NN_0$ partitions $\check\Omega$.  Consider $\check\omega \in \{\tau_j \leq s < \tau_{j+1}\}$ for an arbitrary $j\in \NN_0$.  Then $\tau_{j+1}(\check\omega)$ is the first intervention after time $s$ and the length of time $\tau_1^{(s)}(\check\omega)$ from $s$ until this intervention is $\tau_{j+1}(\check\omega)-s$.  Since $u:=s-\tau_j(\check\omega) < \tau_{j+1}(\check\omega)-\tau_j(\check\omega) = \sigma\circ\theta_{\tau_j}(\check\omega)$, \defref{Markov-nominal-impulse-policy}(a,ii) on $\sigma$ implies $\sigma\circ\theta_{\tau_j}(\check\omega) = u + \sigma\circ\theta_{\tau_j+u}(\check\omega) = u + \sigma\circ\theta_s(\check\omega).$  Using this in \defref{Markov-nominal-impulse-policy}(a) yields $\tau_{j+1}(\check\omega) = s + \sigma\circ\theta_s(\check\omega)$.  It now follows that $\sigma(\check\omega^{(s)}) = \tau_{j+1}(\check\omega)-s = \tau^{(s)}_1(\check\omega)$.  Since $\check\omega^{(s)}\in \Omega$ and $(\tau,Z)$ is a stationary Markov policy, $\tau_1(\check\omega^{(s)}) = \sigma(\check\omega^{(s)}) = \tau^{(s)}_1(\check\omega)$.    

Using an induction argument, we consider the $k$th intervention following time $s$ of $\check\omega$. The length of time $\tau_k^{(s)}(\check\omega)$ from $s$ to the $k$th intervention is $\tau_{j+k}(\check\omega)-s$.  Again since $(\tau,Z)$ is a stationary Markov policy and $\sigma\circ\theta_{\tau_j(\check\omega)}(\check\omega) > s-\tau_j(\check\omega)$, we have
\vspace{-2pt}\begin{eqnarray*}
\tau_{j+k}(\check\omega) &=& \tau_{j+k-1}(\check\omega) + \sigma\circ\theta_{\tau_{j+k-1}}(\check\omega) \\[-2pt]
&=& s + (\tau_{j+k-1}(\check\omega)-s) + \sigma\circ\theta_{s+(\tau_{j+k-1}-s)}(\check\omega) \\[-2pt]
&=& s + \tau_{k-1}(\check\omega^{(s)}) + \sigma\circ\theta_{\tau_{k-1}(\check\omega^{(s)})}(\check\omega^{(s)}) = s + \tau_k(\check\omega^{(s)});\vspace{-3pt}
\end{eqnarray*}
\defref{Markov-nominal-impulse-policy}(a) is used on the path $\check\omega^{(s)}\in \check\Omega$ to obtain the last equality.  Therefore $\tau_k(\check\omega^{(s)}) = \tau_{j+k}(\check\omega) - s = \tau_k^{(s)}(\check\omega)$.  

Finally, since for each $k\in \NN$, $\check\omega(\tau_{j+k}(\check\omega)-) = \check\omega^{(s)}(\tau_k(\check\omega^{(s)})-)$, it immediately follows from \defref{Markov-nominal-impulse-policy}(b) that $Z_k^{(s)}(\check\omega) = Z_k(\check\omega^{(s)})$.  
\end{proof}

Our goal is to show 
that  $(\check\Omega,\check\F, X, \{\check \F_{t} \},  \{\PP_{x}^{(\tau, Z)}, x\in \E\})$ is a Markov family corresponding to each stationary Markov nominal impulse policy $(\tau,Z)$, in which $X$ is the coordinate process on $\check\Omega$.
  As in \thmref{prob-construction}, the measure $\PP^{(\tau,Z)}_x$ is a distribution on $  \check\Omega$ of a process $\wdh{X}$ defined on the countable product space $(\wdt\Omega,\wdt{\cal G},\wdh\PP^{(\htau,\wdh{Z})}_x)$.  (In this section, to distinguish the family of measures and process on the product space $(\wdt\Omega,\wdt{\cal G})$ arising from a stationary Markov nominal impulse policy from those for a general nominal impulse policy, we designate the measures as $\{\wdh\PP^{(\htau,\wdh{Z})}_x\}$ and the corresponding 
 process satisfying \eqref{inv-proc-def} by $\wdh{X}$.)   Using the $\wdh \F_{t}$ defined below, our proofs show  that $(\wdt\Omega,\wdt{\cal G},\wdh{X},\{\wdh\F_t\},\{\wdh\PP^{\htau,\wdh{Z})}_x, x\in \E\})$ is a Markov family and that $(\check\Omega,\check\F, X, \{\check \F_{t} \},  \{\PP_{x}^{(\tau, Z)}, x\in \E\})$ then inherits this property.  

The subclass of stationary Markov  nominal impulse policies consists of those policies for which future decisions are independent of the past, given the present.  For each ``present time'' $s \geq 0$, we therefore need to define the filtration of future information; also information just prior to an intervention as well as the information following an intervention are needed in the ensuing analysis.  Most of this analysis occurs on the product space $\wdt\Omega$; we therefore use the following notation: 
\begin{itemize} 
\item $\wdh\F_t = \sigma(\wdh{X}(0-))\vee\sigma(\wdh{X}(r): 0\leq r \leq t)$; 
\item for each $j\in \NN_0$, $\wdh\F_{\htau_{j+1}-} = \sigma(A\cap \{\htau_{j+1} > t\}: A\in \wdh\F_t,\, t\geq 0)$ is the information prior to the $(j+1)$st intervention; and
\item for each $j\in \NN$, the filtration $\{\F^{(j)}_u\}$ of the $j$th coordinate process $X_j$ on $\Omega_j$ has $\F^{(j)}_u = \sigma(X_j(r): 0 \leq r \leq u)$; we also define the filtration $\{\check\F^{(0)}_u\}$ of the $0$th coordinate process $X_0$ with $\check\F^{(0)}_u = \check\F_u$.   (These filtrations arise in the application of the Markov property of $X_j$ on $\Omega_j$ in the proof of \thmref{Markov-family-thm}.)
\end{itemize} 

Our analysis of the Markov  family property involves partitioning $\wdt\Omega$ according to the cycles and examining the trace of the $\sigma$-algebra $\wdh\F_s$ on these cycles.  For each $s \geq 0$ and $j\in \NN_0$, the $\{\htau_j \leq s < \htau_{j+1}\}$-trace $\sigma$-algebra of $\wdh\F_s$ is defined to be $\{A\cap \{\htau_j \leq s < \htau_{j+1}\}: A\in \wdh\F_s\}$.

Three lemmas are needed.  The first one transfers the terminal time-like  condition \eqref{markov-stopping-time-cnd} to the product space $\wdt\Omega$ and  gives a crucial implication.  For its formulation, recall some important definitions in the construction of $\{\wdh\PP^{(\htau,\wdh{Z})}_x\}$ on $(\wdt\Omega,\wdt{\cal G})$ in the proof of \thmref{prob-construction}.  For each $k\in \NN_0$, $\mbf{e}_k = (\omega_0,\ldots,\omega_k) \in \prod_{i=0}^k \Omega_i$, $\ovl\omega_k$ is given by \eqref{partially-pasted-path} and $\htau_{k+1}(\wdt\omega) = \tau_{k+1}(\ovl\omega_k)$ for those $\wdt\omega\in \wdt\Omega$ whose projection onto the first $k+1$ components is $\mbf{e}_k$ and for which $\htau_k(\wdt\omega) < \infty$.  Note  that  $\htau_{k+1}(\wdt\omega)  =  \htau_{k+1}(\mbf e_{k})$ depends only on $\mbf e_{k}$.  For simplicity of exposition in the sequel, when $k=0$, the notation $\htau_{k}(\mbf{e}_{k-1})$ is to be understood to be $\htau_0 = 0$.  

\begin{lem} \label{sigma-term-time-lemma}
Let $(\tau,Z)$ be a stationary Markov nominal impulse policy and for each $k\in\NN$, define $\htau_k$ as in the proof of \thmref{prob-construction}, with $\htau_0=0$.  Then  for each $k\in \NN$,
 on the set $\{\htau_{k-1} < \infty\}$, 
\begin{itemize}
\item[(a)] the length of the  $(k-1)$st cycle   
$\htau_{ k}(\wdt\omega)- \htau_{ k-1}(\wdt\omega) = \sigma(\omega_{ k-1})$ depends only on the component $\omega_{ k-1}$ of $\wdt\omega$, where $\sigma$ is the function given in Definition \ref{Markov-nominal-impulse-policy}; and 
\item[(b)] for $\wdt\omega \in \{\htau_{ k-1} < \infty\}$, for all $u \geq 0$, 
\begin{equation} \label{sigma-term-time}
\wdt\omega \in \{\htau_{k}- \htau_{k-1} > u\} \quad \mbox{implies}\quad \sigma(\omega_{ k-1}) = u + \sigma\circ\theta_{u}(\omega_{ k-1}).
\end{equation}
\end{itemize}
\end{lem}

\begin{rem}  When $k=1$, $\sigma(\omega_{0})$ is to be understood as  $\sigma\circ \wdt T(\omega_{0})$ and
the shift operator $\check\theta_u$ must be used in \eqref{sigma-term-time}. 
This use arises in the proof below from the identity $\theta_{\tau_0+u} = \check\theta_u\circ\theta_{\tau_0} = \check\theta_u$.  For simplicity of exposition, we leave it to the reader to make this substitution when $\theta_{\tau_0}$ is used throughout the remainder of the paper, such as in \eqref{e:htau_j+1} of \corref{lem42-cor} and \eqref{lem4.4c-claim} of \lemref{lem44-c}.
\end{rem}

\begin{proof}
Let $(\tau,Z)$ and $\htau_k$, $k\in { \NN}$, be as in the statement of the lemma.  On the set $\{\htau_{ k-1} < \infty\}$, define 
$\wdt\sigma_{ k}(\wdt\omega) := \htau_{ k}(\wdt\omega)-\htau_{ k-1}(\wdt\omega)$
to be the length of time of cycle $k-1$.  Also remember that $\htau_{ k}(\wdt\omega) = \htau_{ k}(\mbf e_{ k-1}) =\tau_{ k}(\ovl\omega_{ k-1})$, in which $\ovl\omega_0$ is set equal to $\omega_0$, and similarly $\htau_{ k-1}(\wdt\omega) = \htau_{ k-1}(\mbf{e}_{ k-2}) =\tau_{ k-1}(\ovl\omega_{ k-2})=\tau_{ k-1}(\ovl\omega_{ k-1})$.  Thus $\wdt\sigma_{ k}(\wdt\omega)  = \htau_{ k}(\wdt\omega) - \htau_{ k-1}(\wdt\omega) = \tau_{ k}(\ovl\omega_{ k-1})-\tau_{ k-1}(\ovl\omega_{ k-1}) = \sigma(\theta_{\tau_{ k-1}}(\ovl\omega_{ k-1}))$, where the last equality follows from \defref{Markov-nominal-impulse-policy}(a).  On the other hand, for any $s\ge 0$, by the definition of $\ovl\omega_{ k-1}$, 
\begin{displaymath} \vspace{-3pt} 
(\theta_{\tau_{ k-1}} (\ovl\omega_{ k-1}))(s) = \ovl\omega_{ k-1}(\tau_{ k-1} (\ovl\omega_{ k-1})+s) =\ovl\omega_{ k-1}(\htau_{ k-1}(\wdt\omega)+s) = \omega_{ k-1}(s). \end{displaymath}  
This says that $\theta_{\tau_{ k-1}}(\ovl\omega_{ k-1}) = \omega_{k-1}$ and hence $\wdt\sigma_{k}(\wdt\omega)= \sigma(\omega_{ k-1})$, showing that  $ \htau_{ k}(\wdt\omega)-\htau_{ k-1}(\wdt\omega) $  depends only on $\omega_{ k-1}$ through the function $\sigma$. This gives assertion (a). Consequently, we have $\htau_{ k}(\wdt\omega) = \sum_{i=1}^{{ k}} \sigma(\omega_{i-1})$ for all $k \in{ \NN}$. 

Now suppose $\sigma(\omega_{ k-1}) > u \ge 0$. Then we can apply \eqref{markov-stopping-time-cnd} to get
\begin{align*} 
 \sigma(\omega_{ k-1})= \sigma(\theta_{\tau_{ k-1}}(\ovl\omega_{ k-1})) &= u + \sigma(\theta_{\tau_{ k-1}+u} (\ovl\omega_{ k-1}))    \\&
   = u + \sigma(\theta_{u} (\theta _{\tau_{ k-1}} (\ovl\omega_{ k-1})) ) 
   =   u + \sigma(\theta_{u }(\omega_{ k-1})), 
\end{align*} 
completing the proof.
\end{proof} 

\begin{cor}\label{lem42-cor}
For each $j\in \NN_0$ and  $s\ge 0$, on the set  $\{\wdt\omega: \htau_{j}\le s < \htau_{j+1} \}$,  
\begin{align}\label{e:htau_j+1} 
\htau_{j+1}(\wdt\omega) = s + \sigma\circ\theta_{u}(\omega_{j}), 
\end{align} 
where $u =u(\mbf e_{j-1}) = s-\htau_{j}(\mbf e_{j-1})$, and hence for each $k\ge 2$, $\htau_{j+k} = \htau_{j+1} + \sum_{i=2}^{k} \sigma(\omega_{j+i-1}) = s + { \sigma(\theta_{u} \omega_{j})} + \sum_{i=2}^{k} \sigma(\omega_{j+i-1})$. 
\end{cor}

\begin{proof} 
The equation $\htau_{j+1}(\wdt\omega) = s + \sigma\circ\theta_{u}(\omega_{j})$ follows from \eqref{sigma-term-time} directly because $\htau_{j+1}(\wdt\omega) = \htau_{j} (\wdt\omega)+ \sigma(\omega_{j})$.
\end{proof}

Our proof of the Markov  family property for the impulse controlled process $\wdh{X}$ involves conditioning on whether the times $s$ and $t$ are in the same cycle or whether $t$ is in a later cycle.  Lemmas \ref{Fs-properties}  and \ref{lem44-c} establish results that are central to this proof. 
 \lemref{Fs-properties} is  used when $s$ and $t$ are in the same cycle whereas \lemref{lem44-c} is key to the proof when $t$ is in a later cycle.

\lemref{Fs-properties}(a) establishes the form of the $\{\htau_j \leq s < \htau_{j+1}\}$-trace $\sigma$-algebra of $\wdh\F_s$ while (b) identifies the form of sets which generate this trace $\sigma$-algebra.  Intuitively for (b), when $A=\{\wdh{X}(t_1)\in B_1,\ldots,\wdh{X}(t_n)\in B_n\}\in\wdh{\F}_s$, the generating set $\{\htau_j \leq s < \htau_{j+1}\}\cap A$ can be identified by a slice $\{\htau_j \leq s\}\cap A_{j-1} \in \wdh\F_{\htau_j-}$ in which $A_{j-1}$ is determined by those $t_i$ which are less than $\htau_j(\mbf{e}_{j-1})$ and then, conditional on $\mbf{e}_{j-1}$ being in this set, the remaining dependence is of the form $\{\sigma \geq s -\htau_j(\mbf{e}_{j-1}\}\cap \Gamma_j$ with $\Gamma_j$ corresponding to those $t_i$ which are in the $j$th cycle.  This latter set is determined by the $\sigma$-algebra generated by $\wdh{X}$ over the elapsed time in the $j$th cycle. 

\begin{lem} \label{Fs-properties}
For  a stationary  Markov nominal impulse policy  $(\tau,Z)$,  
 let  
  $\wdh{X}$ be the process  defined by \eqref{inv-proc-def}  and  $\{\wdh{\F}_t\}$  be  the natural filtration of $\wdh{X}$.  Let $\boldsymbol{\Pi}_j:\wdt\Omega \rightarrow \prod_{i=0}^j \Omega_i$ be the projection mapping onto the first $j+1$ components.  Then for each $j\in \NN_0$, 
\begin{enumerate}
\item[(a)] the $\{\htau_j \leq s < \htau_{j+1}\}$-trace $\sigma$-algebra of $\wdh{\F}_s$ consists of sets having the form $A \times \prod_{i=j+1}^\infty \Omega_i$ for some $A\in \otimes_{i=0}^j \mathscr{F}_i$ and as a result, for any $\wdh{\F}_s$-measurable random variable $W$, $I_{\{\htau_j \leq s < \htau_{j+1}\}}\cdot W$ is solely a function of $\mbf{e}_j$;
\item[(b)] the $\{\htau_j \leq s < \htau_{j+1}\}$-trace $\sigma$-algebra of $\wdh{\F}_s$ is generated by sets $\{\htau_j\leq s < \htau_{j+1}\}\cap A$ of the form
\setlength{\arraycolsep}{0.5mm}
\begin{align} \label{A-decomp}
 \{\htau_j \leq s < \htau_{j+1}\} \cap A & =    \boldsymbol{\Pi}_{j-1}^{-1}(\{\mbf{e}_{j-1}: \htau_j(\mbf{e}_{j-1}) \leq s\} \cap A_{j-1}) \\[-1pt]  \nonumber
&\quad\  \cap \, \boldsymbol{\Pi}_j^{-1}(\{\mbf{e}_j: \sigma(\omega_j) > s - \htau_j(\mbf{e}_{j-1})\} \cap \Gamma_j)
\end{align}
for some sets $A_{j-1}$ and $\Gamma_{j}$ in which $\{\htau_j \leq s\}\cap A_{j-1} \in \wdh{\cal F}_{\htau_j-}$ and $\{\mbf{e}_j: \sigma(\omega_j) > s - \htau_j(\mbf{e}_{j-1})\} \cap \Gamma_j \in \wdh{\cal F}^{\htau_j(\mbf{e}_{j-1})}_s$; 
(for $j=0$, $A_{-1}=\emptyset$; see also \eqref{gen-set-rep} for specific definitions of $A$, $A_{j-1}$ and $\Gamma_j$). 
\end{enumerate}
\end{lem}

\begin{proof} {\bf (a)} 
Recall for each $j\in \NN$, $\htau_j$ is a function of $\mbf{e}_{j-1}$ and from the previous lemma $\htau_{j} - \htau_{j-1}$ 
depends only on $\omega_{j-1}$.  Observe that the event 
\begin{eqnarray*}
\{\htau_j \leq s < \htau_{j+1}\} &=& \boldsymbol{\Pi}_j^{-1}\{\mbf{e}_j: \htau_j(\mbf{e}_{j-1}) \leq s < \htau_{j+1}(\mbf{e}_j)\} 
\end{eqnarray*}
is a set of the form $G\times \prod_{i=j+1}^\infty \Omega_i$ with $G\in \otimes_{i=0}^j \mathscr{F}_i$.    
For ease of reading in the sequel, we slightly abuse notation by dropping the reference to $\boldsymbol{\Pi}_j^{-1}$ in the right-hand side expression. 

Now consider a generating set $\{\wdh{X}(t_1)\in B_1,\ldots,\wdh{X}(t_n)\in B_n\}$ of $\wdh{\F}_s$, which means that $n\in \NN$, $0 \leq t_1 < \cdots < t_n \leq s$ and $B_i \in \B(\E)$ for $i=1,\ldots, n$.  To simplify notation, for $i=1,\ldots,n$, set $\wdh{B}_i=\{\wdh{X}(t_i)\in B_i\}$.  Since each $t_i \leq s$, on the set $\{\htau_j \leq s < \htau_{j+1}\}$, the random variable $\wdh{X}(t_i)$ is determined by a subset of the coordinate paths $\mbf{e}_j$.  As a result, $\{\htau_j \leq s < \htau_{j+1}\}\cap\wdh{B}_i \in (\otimes_{i=0}^j \mathscr F_i) \otimes (\prod_{i=j+1}^\infty \Omega_i)$.  Since the collection of $\wdh{B}_i$ generate $\wdh{\F}_s$, every set in the $\{\htau_j\leq s < \htau_{j+1}\}$-trace $\sigma$-algebra of $\wdh{\F}_s$ has the claimed form $A\times \prod_{i=j+1}^\infty \Omega_i$.  The sole dependence of the restriction of $W$ to $\{\htau_j \leq s < \htau_{j+1}\}$ on $\mbf{e}_j$ is now immediate.
\medskip

\noindent {\bf (b)}  As before, 
 we slightly abuse notation by dropping reference to $\boldsymbol\Pi_j^{-1}$.  First observe that $\{\htau_j \leq s < \htau_{j+1}\} = \{\mbf{e}_{j-1}: \htau_j(\mbf{e}_{j-1}) \leq s\} \cap \{\mbf{e}_j: \sigma(\omega_j) > s-\htau_j(\mbf{e}_{j-1})\}$.  In the analysis below, for each $\mbf{e}_{j-1}$, the index $\ell(\mbf{e}_{j-1})$ is the maximal index such that $t_{\ell(\mbf{e}_{j-1})} < \htau_j(\mbf{e}_{j-1})$.  Now using the generating sets $\wdh{B}_i$ of $\wdh{\F}_s$ with $i=1,\ldots, n$ and $n\in \NN$, we have
\begin{align} \label{gen-set-rep} 
& \{\htau_j \leq s < \htau_{j+1}\}\cap (\cap_{i=1}^n \wdh{B}_i) \\  \nonumber
&= (\{\mbf{e}_{j-1}: \htau_j(\mbf{e}_{j-1}) \leq s\}\cap (\cap_{i=1}^{\ell(\mbf{e}_{j-1})} \wdh{B}_i)) \\ \nonumber
& \; \cap (\{\mbf{e}_{j}: \sigma(\omega_j) > s-\wdh\tau_j(\mbf{e}_{j-1})\} \cap (\cap_{i=\ell(\mbf{e}_{j-1})+1}^n \wdh{B}_i)) \\  \nonumber
&= (\{\mbf{e}_{j-1}: \htau_j(\mbf{e}_{j-1}) \leq s\}\cap (\cap_{i=1}^{\ell(\mbf{e}_{j-1})} \wdh{B}_i)) \\  \nonumber
& \; \cap \{\mbf{e}_{j}: \sigma(\omega_j) > s-\wdh\tau_j(\mbf{e}_{j-1}), \wdh{X}(t_{\ell(\mbf{e}_{j-1})+1})\in B_{\ell(\mbf{e}_{j-1})+1},\ldots,\wdh{X}(t_n)\in B_n\} \\  \nonumber 
&= (\{\mbf{e}_{j-1}: \htau_j(\mbf{e}_{j-1}) \leq s\}\cap (\cap_{i=1}^{\ell(\mbf{e}_{j-1})} \wdh{B}_i)) \cap \{\mbf{e}_j: \sigma(\omega_j) > s-\wdh\tau_j(\mbf{e}_{j-1}),  \\ \nonumber
&  \qquad \qquad \omega_j(t_{\ell(\mbf{e}_{j-1})+1}-\htau_j(\mbf{e}_{j-1}))\!\in \!B_{\ell(\mbf{e}_{j-1})+1},\ldots,\omega_j(t_n-\htau_j(\mbf{e}_{j-1}))\!\in \! B_n\}.
\end{align}
In this representation, $A_{j-1} = \cap_{i=1}^{\ell(\mbf{e}_{j-1})} \wdh{B}_i$ and $\Gamma_j = \{\omega_j(t_{\ell(\mbf{e}_{j-1})+1}-\htau_j(\mbf{e}_{j-1}))\!\in \!B_{\ell(\mbf{e}_{j-1})+1},\ldots,
\omega_j(t_n-\htau_j(\mbf{e}_{j-1}))\!\in \! B_n\}$.  For those $\mbf{e}_{j-1}$ for which $t_1 > \htau_j(\mbf{e}_{j-1})$, $\ell(\mbf{e}_{j-1})+1=1$ and we set $A_{j-1}$ to be $\prod_{k=0}^{j-1} \Omega_k$ and, similarly for those $\mbf{e}_{j-1}$ such that $t_n < \htau_j(\mbf{e}_{j-1})$, $\Gamma_j = \Omega_j$.

Notice that $\{\mbf{e}_{j-1}: \htau_j(\mbf{e}_{j-1}) \leq s\} \cap A_{j-1} \in \otimes_{i=0}^{j-1} {\FFF}_i$.  Also observe that for each fixed $\mbf{e}_{j-1}$, the $\{\htau_j \leq s < \htau_{j+1}\}$-trace $\sigma$-algebra of $\wdh{\cal F}^{\htau_j(\mbf{e}_{j-1})}_s$ is the $\sigma$-algebra generated by the $j$th coordinate process $X_j$ for the elapsed time in the $j$th cycle.  Thus for fixed $\mbf{e}_{j-1}$ with $\htau_j(\mbf{e}_{j-1}) \leq s$, the second compound event $\{\mbf{e}_j: \sigma(\omega_j) > s - \htau_j(\mbf{e}_{j-1})\} \cap \Gamma_j$ in \eqref{gen-set-rep} belongs to ${\cal F}^{(j)}_{s-\htau_j(\mbf{e}_{j-1})}$.
 \end{proof}

 The next lemma characterizes the conditional probability of a future event given the information to the end of an earlier cycle.

\begin{lem}\label{lem44-c} 
Let $(\tau, Z) $ and 
 $\wdh X$ be as in the statement of \lemref{Fs-properties}.
 Let  
  $\wdh\PP^{(\htau,\wdh{Z})}_x$  be the probability measure on $(\wdt\Omega,\wdt{\cal G})$ given by the construction in \thmref{prob-construction}. 
  Then for $j\in \NN_0$ on the set $\{\htau_{j} \le s < \htau_{j+1} \le t \}$, for  any $B\in \B(\E)$, there exists a measurable function $\phi_{j}$ so that 
\begin{align} \label{lem4.4c-claim}
\nonumber \wdh\PP^{(\htau,\wdh{Z})}_{x}\{\wdh X(t) \in B, t\ge \htau_{j+1} |  \wdh{\F}_{\htau_{j+1}-} \} & = \phi_{j} (\omega_{j} (\sigma (\omega_{j})-), t-\htau_{j}(\mbf{e}_{j-1}) - \sigma(\omega_{j})) \\ 
& =  \phi_{j}([\theta_{u}\omega_j](\sigma(\theta_{u}\omega_{j})-), t-s-\sigma(\theta_{u} \omega_{j})).
\end{align} 
\end{lem}

\begin{proof} Thanks to \lemref{sigma-term-time-lemma} and \defref{Markov-nominal-impulse-policy}, for a stationary Markov nominal impulse policy $(\tau, Z) $,  for each $k\in \NN$,  on the set $\{\htau_{k} < \infty\}$, the transition function $Q$ depends only on $\wdt Y_{k}:=\omega_{k-1}(\sigma(\omega_{k-1})-)$ and $\wdt Z_{k} := \mathfrak z  (\omega_{k-1}(\sigma(\omega_{k-1})-))$.  Consequently the transition kernel $P_{k+1}$ in \eqref{trans-fn-k} only depends on the path $\omega_{k}$, not on the earlier paths $\mbf e_{k-1}=(\omega_{0},\dots, \omega_{k-1})$.

Let $A\in  \wdh{\F}_{\htau_{j+1}-}$, $j\in \NN_0$.  For $k\in \NN$ and each $\mbf{e}_{j+k-1}$, let $M_{j+k} = M_{j+k}(\mbf{e}_{j+k-1}) \\ := \{\omega_{j+k}: \omega_{j+k}(t-\htau_ {j+k}(\mbf{e}_{j+k-1})) \in B, \sigma (\omega_{j+k}) > t-\htau_ {j+k}(\mbf{e}_{j+k-1})\}$.  Then 
\begin{align*} 
  &  \wdh\PP^{(\htau,\wdh{Z})}_{x}(\{\wdh X(t) \in B \}\cap\{ t\ge \htau_{j+1} \}\cap A)  \\[-2pt]    
	& = \sum_{k=1}^{\infty}  \wdh\PP^{(\htau,\wdh{Z})}_{x}(\{\wdh X(t) \in B \}\cap A \cap \{\htau_{j+k} \le t < \htau_{j+k +1} \}) = \int I_A f_j(\mbf{e}_j)\, \wdh\PP^{(\htau,\wdh{Z})}_{x}(d\mbf{e}_j)  
	\end{align*}
in which 
\begin{align*} 
 \nonumber f_j(\mbf{e}_j) := & I_{\{ \htau_{j+1} \le t  \}}  \int_{\E}\PP_{v_{j+1}}(M_{j+1})\, Q_{(\wdt Y_{j+1}, \wdt Z_{j+1})}(dv_{j+1}) \\[-2pt] 
\nonumber &\ \  + \sum_{k=2}^{\infty}  \idotsint   I_{\{ \htau_{j+k} \le t  \}} \int_{\E}\PP_{v_{j+k}}(M_{j+k})\, Q_{(\wdt Y_{j+k}, \wdt Z_{j+k})}(dv_{j+k}) \\[-8pt] & \quad \hspace{1in}   P_{j+k-1}(\omega_{j+k-2},d\omega_{j+k-1})  \dots P_{j+1}(\omega_{j},d\omega_{j+1}). 
\end{align*} 
The dependence of $f_j$ on the transition kernel  $P_{j+1}(\omega_{j},d\omega_{j+1})$ is solely through  $\wdt Y_{j} = \omega_{j}(\sigma (\omega_{j})-)$; in addition, 
using \lemref{sigma-term-time-lemma}, the terms also depend  on $t- \htau_{j+1}(\mbf{e}_j) = t-\htau_{j}(\mbf{e}_{j-1}) - \sigma (\omega_{j})$. Thus there exists some  measurable function $\phi_{j}$ so that  $f_j(\mbf{e}_j)=\phi_{j}(\omega_{j}(\sigma(\omega_{j})-), t-\htau_{j}(\mbf{e}_{j-1}) - \sigma (\omega_{j}))$. Consequently, we have 
\begin{align*}
& \wdh\PP^{(\htau,\wdh{Z})}_{x}\{\{\wdh X(t) \in B \}\cap \{  t\ge \htau_{j+1}\}\cap A\} \\[-2pt]  & \ = \int I_{A}    \phi_{j}(\omega_{j}(\sigma(\omega_{j})-), t-\htau_{j}(\mbf{e}_{j-1}) - \sigma(\omega_{j}))   \wdh\PP^{(j)}_{x}(d\mbf e_{j})\\[-2pt] 
 & \ =  \int I_{A}   \phi_{j}(\omega_{j}(\sigma(\omega_{j})-), t-\htau_{j}(\mbf{e}_{j-1}) - \sigma(\omega_{j})) \wdh\PP^{(\htau, \wdh Z)}_{x}(d\wdt\omega).
\end{align*}  
The first equation  of \eqref{lem4.4c-claim} thus follows from the definition of conditional probability.

We next show that the second equation  of \eqref{lem4.4c-claim} holds as well. First Corollary \ref{lem42-cor}  says that on the set $\{\htau_{j} \le s < \htau_{j+1} \}$, $\htau_{j+1} = s+ \sigma(\theta_{u} \omega_{j})$ and
 hence  $ t-\htau_{j} - \sigma(\omega_{j}) = t- \htau_{j+1} 
 = t-s - \sigma(\theta_{u} \omega_{j})$. 
On the other hand, 
 on the set  $\{\htau_{j} \le s < \htau_{j+1}   \}$,  we have from  \eqref{sigma-term-time} that 
$ \sigma(\omega_{j})= u + \sigma(\theta_{u}(\omega_{j})).
$  Consequently, the dependence of $P_{j+1}(\omega_{j},d\omega_{j+1}) $ on $\omega_{j}$ is only through $ \omega_{j}( u + \sigma(\theta_{u}(\omega_{j}))-)= [\theta_{u} \omega_{j}] ( \sigma(\theta_{u}(\omega_{j}))-)$.  Using these observations in the first equation of \eqref{lem4.4c-claim} then leads to the second representation. 
\end{proof}


We now establish the Markov 
family property of $(\wdt\Omega,\wdt{\cal G},\wdh{X},\{\wdh\F_t\},\{\wdh\PP^{\htau,\wdh{Z})}_x, x\in \E\})$ corresponding to  a stationary  Markov policy $(\tau,Z)$ and then verify that the Markov family property projects onto $(\check\Omega,\check\F,X, \{\check\F_t\}, \{\PP^{(\tau,Z)}_x: x\in \E\})$. 

\begin{thm} \label{Markov-family-thm}
For   a stationary Markov nominal impulse policy $(\tau,Z)$,  $(\wdt\Omega,\wdt{\cal G},\wdh{X},\newline\{\wdh\F_t\},\{\wdh\PP^{\htau,\wdh{Z})}_x, x\in \E\})$ is a Markov family.
\end{thm}

\begin{proof}
In light of Theorem~\ref{prob-construction} 
and with reference to Definition 2.5.11 and Proposition 2.5.13 of \cite{kara:88}, the only property to prove for this family is that for $x\in \E$, $0\leq s < t$ and $B \in \B(\E)$,
\begin{equation} \label{markov-family}
 \wdh\PP^{(\htau,\wdh Z)}_x(\wdh X(t)\in B | \wdh\F_{s}) = \wdh\PP^{(\wdh\tau,\wdh Z)}_{\wdh X(s)} (\wdh X(t-s) \in B), \ \wdh\PP_{x}^{(\wdh\tau,\wdh Z)}\text{-a.s.} 
\end{equation} 
where the right-hand side equals the function $y\mapsto \wdh\PP^{(\wdh\tau,\wdh Z)}_{y} (\wdh X(t-s) \in B) $ evaluated at $y =\wdh X(s)$. Indeed, according to the proof of \thmref{prob-construction}, the function $y\mapsto \wdh\PP^{(\wdh\tau,\wdh Z)}_{y} (\wdh X(t-s) \in B) $ is universally measurable. Using the probability measure $\wdh\PP_{x}^{(\wdh\tau, \wdh Z)}   \wdh X(s)^{-1}$  on $\E$,  
  there exists a Borel measurable function $g: \E \mapsto [0,1]$ such that $g(y) = \wdh\PP^{(\wdh \tau,\wdh Z)}_{y} (\wdh X(t-s) \in B) $ for $\wdh\PP_{x}^{(\wdh\tau, \wdh Z)}   \wdh X(s)^{-1}$-a.e. $y\in \E$. This in turn  implies that   $g( \wdh X(s)) = \wdh \PP^{(\wdh\tau,\wdh Z)}_{\wdh  X(s)} ( \wdh X(t-s) \in B) $, $\wdh \PP_{x}^{(\wdh\tau,\wdh Z)}$-a.s. Therefore, if \eqref{markov-family} holds, then $\wdh\PP^{(\wdh\tau,\wdh Z)}_x(\wdh X(t)\in B |  \wdh\F_{s})$ has a $\sigma(\wdh X(s))$-measurable version. Consequently, we have  $\wdh\PP^{(\wdh \tau,\wdh Z)}_x(\wdh X(t)\in B |  \wdh \F_{s}) = \wdh\PP^{(\wdh \tau,\wdh Z)}_x(\wdh X(t)\in B | \wdh X(s)) $,  $\wdh\PP^{(\wdh\tau,\wdh Z)}_x$-a.s. The Markov family property therefore follows.  

We now establish \eqref{markov-family}. 
  It suffices   to show that for any $A\in \wdh \F_{s}$, 
\begin{equation}
\label{eq:Markov_family}
  \wdh\PP^{(\htau,\wdh{Z})}_{x}(\{   A \cap \{\wdh X_{t} \in B \}  )=    \EE^{\wdh\PP^{(\htau,\wdh{Z})}_{x}}\Big[  I_{A}  \wdh \PP^{(\htau,\wdh{Z})}_{\wdh X(s)} \{\wdh X(t-s) \in B   \}\Big].
\end{equation}
To this end, we note that thanks to the stationarity of the policy  $(\htau, \wdh Z)$, the transition kernel 
\vspace{-3pt}\begin{equation} \label{stat-trans-fn}
P_{j}(\omega_{j-1}, \cdot) = \int_{\E} \PP_{v_{j}}(\cdot) Q_{(\omega_{j-1}(\sigma(\omega_{j-1})-), \mathfrak z(\omega_{j-1}(\sigma(\omega_{j-1})-)))} (dv_{j}) \vspace{-3pt}
\end{equation}
depends on $\omega_{j-1}$ only through $\omega_{j-1}(\sigma(\omega_{j-1})-)$. Hence  we can denote $P_{j}(\omega_{j-1}, \cdot)$  by $P(\omega_{j-1}, \cdot)$ for every $j\in \NN$.  For simplicity, in the rest of the proof, we write  $Q(\omega_{j-1}, dv_{j})$ for $Q_{(\omega_{j-1}(\sigma(\omega_{j-1})-), \mathfrak z(\sigma(\omega_{j-1})-)))} (dv_{j})$ and $\wdh\PP_{x}$ for $ \wdh\PP^{(\htau,\wdh{Z})}_{x}$. In addition, let $A\in \wdh\F_{s}$ have the decomposition  \eqref{A-decomp} on the set $\{\htau_{j} \le s   < \htau_{j+1}  \}$, $j\in \NN_{0}$.  For $j\in \NN$, simplify notation by setting $F_{j-1}=A_{j-1} \cap \{\htau_j \leq s\}$ and for $\mbf{e}_{j-1}\in F_{j-1}$, set $G_j(\mbf{e}_{j-1}) = \Gamma_j \cap \{\omega_j: \sigma(\omega_j) > u(\mbf{e}_{j-1})\}\subset \Omega_j$.  Note in particular that $F_{j-1} \in \wdh{\F}_{\htau_j-}$ and $G_j(\mbf{e}_{j-1}) \in \F^{(j)}_{u(\mbf{e}_{j-1})}$. 

{\bf Case (i).} We first consider  the set  $\{\htau_{j} \le s < t < \htau_{j+1}  \}$. Recall $u=u(\mbf{e}_{j-1}): = s-\htau_{j}(\mbf{e}_{j-1})$, $v=v(\mbf{e}_{j-1}): = t-\htau_{j}(\mbf{e}_{j-1})$,  and   $M_j = M_{j}(\mbf{e}_{j-1}) : = \{\omega_{j}: \omega_{j}(v) \in B, \sigma(\omega_{j}) > v \}$ for each $\mbf{e}_{j-1}$.  
         Then
\begin{align} \label{e1-case1:family}
\nonumber &  \wdh\PP_{x} (\{ \htau_{j} \le s < t < \htau_{j+1}\} \cap A \cap \{\wdh X(t) \in B \} ) \\[-1.5pt] \nonumber  
& \ = \EE^{\wdh\PP_{x}}[I_{\{ \htau_{j} \le s\}} I_{\{\sigma(\omega_{j}) > u \}} I_{A} I_{\{\sigma(\omega_{j}) > v \}} I_{ \{\wdh X(t) \in B \}}]\\[-1.5pt]
\nonumber  & \ =  \int_{F_{j-1}} \int_{\Omega_{j}} I_{G_{j}(\mbf{e}_{j-1})}(\omega_{j})  I_{M_{j}}(\omega_j) 
P(\omega_{j-1}, d\omega_{j}) \wdh\PP^{(j-1)}_{x} (d\mbf e_{j-1})\\[-1.5pt]
\nonumber  & \ =  \int_{F_{j-1}} \int_{\E} \EE^{\PP_{v_{j}}}\big [ I_{G_{j}(\mbf{e}_{j-1})}  I_{M_{j}} 
\big] Q(\omega_{j-1}, dv_{j}) \wdh\PP^{(j-1)}_{x} (d\mbf e_{j-1})\\[-1.5pt]
  & \ =  \int_{F_{j-1}} \int_{\E} \EE^{\PP_{v_{j}}} \! \Big[ I_{G_{j}(\mbf{e}_{j-1})} \EE^{\PP_{v_{j}}}\big[ I_{M_{j}} | \F_{u}^{(j)}\big] \Big] \!Q(\omega_{j-1}, dv_{j}) \wdh\PP^{(j-1)}_{x} (d\mbf e_{j-1}).  
\end{align} 
Denote $H: = \{\omega_{j}\in \Omega_{j}: \sigma(\omega_{j}) > t-s, \omega_{j}(t-s) \in B \} $. Note that the terminal-time property for $\sigma$ implies that on the set  $\{\htau_{j} \le s  < \htau_{j+1}  \}\subset \{\sigma(\omega_{j}) > u \}$, for each $\mbf{e}_{j-1}$  
\begin{align*} 
 \theta_{u}^{-1} H   & = \{\omega_{j}\in \Omega_{j}: \sigma(\theta_{u}\omega_{j}) > t-s, (\theta_{u}\omega_{j})(t-s) \in B \} \\
  & = \{\omega_{j}\in \Omega_{j}: \sigma( \omega_{j}) > v, \omega_{j}(v) \in B \} = M_j.
\end{align*}  
Consequently   using the Markov family property of the $j$th coordinate process, we can apply property (e'') on p.~78 of  \cite{kara:88} to derive 
\vspace{-3pt}\begin{displaymath}
\EE^{\PP_{v_{j}}}\big[ I_{M_j}| \F_{u}^{(j)}\big] =\PP_{v_{j}}[ \theta_{u}^{-1} H|  \F_{u}^{(j)}]={\PP_{\omega_{j}(u)}}(H), 
    \ \ \PP_{v_{j}} \text{-a.s.}\vspace{-3pt}
\end{displaymath}
On the other hand,  for any $y\in \E$, by the construction of the measure $\wdh\PP_{y} = \wdh\PP_{y}^{(\htau, \wdh Z)}  $ in \thmref{prob-construction}, the stationarity of the policy implies 
\begin{align} \label{e2-case1:family}
\nonumber \PP_{y}(H) & =    {\PP_{y}}\{\omega_{j} \in \Omega_{j}:  \sigma(\omega_{j}) > t-s ,  \omega_{j}(t-s) \in B \} \\
\nonumber  &= {\PP_{y}} \{\omega_{0} \in \Omega_{0}:\sigma(\wdt{T}(\omega_{0})) > t-s,  \omega_{0}(t-s) \in B \} \\
 & = \wdh \PP_{y} \{\wdt\omega\in \wdt\Omega: \wdh X(t-s) \in B, \htau_{1} > t-s \}=:   \wdh \PP_{y}(\wdh H_{1}),
\end{align}  where $\wdh H_{1}: = \{\wdt\omega\in \wdt\Omega: \wdh X(t-s) \in B, \htau_{1} > t-s \} $.
 The function $y\mapsto \PP_{y}(H)$ is universally measurable, so there exists a Borel measurable function $\varphi$ such that $\varphi(y) = \PP_{y}(H)$ for  $\PP_{v_{j}} \omega_{j}(u)^{-1}$-a.e. $y\in \E$ and hence $\varphi(\omega_{j}(u)) = \PP_{\omega_{j}(u)}(H)$, $\PP_{v_{j}}$-a.s.
   Likewise, by the proof of  \thmref{prob-construction}, the function $y\mapsto  \wdh \PP_{y} (\wdh H_{1}) $
    is universally measurable and hence there exists a Borel measurable function $\psi$ so that $\psi(y) = \wdh \PP_{y} (\wdh H_{1}) $
   for  $\PP_{v_{j}} \omega_{j}(u)^{-1}$-a.e. $y\in \E$. This, in turn, implies that $\psi(\omega_{j}(u)) = \wdh \PP_{\omega_{j}(u)} (\wdh H_{1})  
     $-a.s. Then, in view of \eqref{e2-case1:family}, we have   $\varphi(y) = \psi(y)$ for  $\PP_{v_{j}} \omega_{j}(u)^{-1}$-a.e. $y\in \E$ and hence   $\varphi(\omega_{j}(u))= \psi(\omega_{j}(u))$ $\PP_{v_{j}}$-a.s. Therefore using the fact that $\omega_{j}(u) = \wdh X(s)$ on the set $\{\htau_{j} \le s   < \htau_{j+1}  \}$,  
\begin{align*}
 \EE^{\PP_{v_{j}}}\big[ I_{\{\sigma(\omega_{j}) > v \}} I_{ \{\omega_{j}(v) \in B \}}| \F_{u}^{(j)}\big] &= \PP_{\omega_{j}(u)}(H) = \wdh \PP_{\omega_{j}(u)} (\wdh H_{1}) =  \wdh \PP_{\wdh X(s)}  (\wdh H_{1}),\;  \PP_{v_{j}}\text{-a.s.}
\end{align*}  
Substituting this observation into \eqref{e1-case1:family}, we obtain   
\vspace{-2pt}\begin{align} \label{eq:case1_family}
\nonumber  &   \wdh\PP_{x} (\{ \htau_{j} \le s < t < \htau_{j+1}\} \cap A \cap \{\wdh X(t) \in B \} )  \\[-2pt]
\nonumber  & \ = \int_{F_{j-1}} \int_{\E} \EE^{\PP_{v_{j}}}\! \Big[ I_{G_{j}(\mbf{e}_{j-1})}(\omega_{j}) \wdh \PP_{\wdh X(s)}  (\wdh H_{1})\!\Big] Q(\omega_{j-1}, dv_{j}) \wdh\PP^{(j-1)}_{x} (d\mbf e_{j-1})\\[-3pt]
\nonumber  & \ =  \int_{F_{j-1}} \int_{\Omega_{j}}  I_{G_{j}(\mbf{e}_{j-1})}(\omega_{j}) \wdh \PP_{\wdh X(s)}  (\wdh H_{1})P(\omega_{j-1}, d\omega_{j}) \wdh\PP^{(j-1)}_{x} (d\mbf e_{j-1}) \\[-3pt]
  & \ =  \EE^{\wdh\PP_{x}}\Big[  I_{A}I_{\{ \htau_{j} \le s < \htau_{j+1}\}}  \wdh \PP_{\wdh X(s)} \{\wdh X(t-s) \in B, \htau_{1} > t-s \}\Big].
\end{align}

{\bf Case (ii).} We now concentrate   on the set  $\{\htau_{j} \le s  < \htau_{j+1}\le t\}$.  Due to the stationarity of the policy and the implication that each transition function is given by \eqref{stat-trans-fn}, the assertion of \lemref{lem44-c} can be strengthened as follows: for  any $B\in \B(\E)$, there exists a measurable function $\phi$ so that, for each $j$, on the set  $\{\htau_{j} \le s < \htau_{j+1} \le t \}$, 
\begin{equation}
  \label{lem4.4c-claim-new}
\begin{aligned}
 \wdh\PP_{x}\{\wdh X(t) \in B, t\ge \htau_{j+1} |  \wdh{\F}_{\htau_{j+1}-} \} = f(\mbf{e}_{j-1}, \omega_{j})= f(\mbf{e}_{j-1},\theta_{u}\omega_{j}), 
\end{aligned}
\end{equation}
where we  denote $f(\mbf{e}_{j-1}, \omega_{j}): = \phi (\omega_{j} (\sigma (\omega_{j})-), t-\htau_{j}(\mbf{e}_{j-1}) - \sigma(\omega_{j})) $ and  $f(\mbf{e}_{j-1},\theta_{u}\omega_{j}) \linebreak :=  \phi([\theta_{u}\omega_j](\sigma(\theta_{u}\omega_{j})-), t-s-\sigma(\theta_{u} \omega_{j}))$.

As in case (i), we fix an arbitrary $A\in \wdh \F_{s}$ with decomposition  \eqref{A-decomp}  and compute
\begin{align} \label{e1:case2_family}
 \nonumber  & \wdh\PP_{x}(\{ \htau_{j} \le s < \htau_{j+1}\} \cap A \cap \{\wdh   X(t) \in B \} \cap \{\htau_{j+1} \leq t\}) \\[-2pt]
 \nonumber   & \ = \EE^{\wdh\PP_{x}}\left[I_{\{ \htau_{j} \le s\}} I_{\{\sigma(\omega_{j}) > u \}} I_{A}\, \EE^{\wdh\PP_x}[I_{ \{\wdh   X(t) \in B \}} I_{\{\htau_{j+1} \leq t\}} | \wdh\F_{\htau_{j+1}-}] \right] \\[-2pt]
 \nonumber   & \ =  \int_{F_{j-1}} \int_{\Omega_{j}} I_{G_{j}}(\omega_{j}) f(\mbf{e}_{j-1},\omega_{j}) 
   P(\omega_{j-1}, d\omega_{j}) \wdh\PP^{(j-1)}_{x} (d\mbf e_{j-1})\\[-2pt]
 \nonumber   
& \ =  \int_{F_{j-1}} \int_{\E} \EE^{\PP_{v_{j}}}\big [ I_{G_{j}}(\omega_{j}) f(\mbf{e}_{j-1},\theta_{u}\omega_{j})]\, Q(\omega_{j-1}, dv_{j}) \wdh\PP^{(j-1)}_{x} (d\mbf e_{j-1})\\[-2pt]
  & \ =  \int_{F_{j-1}} \int_{\E} \EE^{\PP_{v_{j}}} \Big[ I_{G_{j}} \EE^{\PP_{v_{j}}}\big[f(\mbf{e}_{j-1},\theta_{u}\omega_{j})| \F_{u}^{(j)}\big] \Big]  
        Q(\omega_{j-1}, dv_{j}) \wdh\PP^{(j-1)}_{x} (d\mbf e_{j-1}), 
  \end{align}
 where we used  \eqref{lem4.4c-claim-new} 
 to derive the third and fourth equalities. 
 Next we apply the Markov family property (c.f.~(e'') on p.~78 of  \cite{kara:88}) to obtain 
\vspace{-2pt} \begin{equation}
\label{e2:case2_family}
\begin{aligned} 
 \EE^{\PP_{v_{j}}}\big[f(\mbf{e}_{j-1},\theta_{u}\omega_{j})
            | \F_{u}^{(j)}\big]  & = \EE^{\PP_{v_{j}}}\big[\phi([\theta_{u}\omega_j](\sigma(\theta_{u}\omega_{j})-), t-s-\sigma(\theta_{u} \omega_{j}))| \F_{u}^{(j)}\big] \\[-2pt] &  
 =  \EE^{ \PP_{\omega_{j}(u)}} [\phi(\omega_j(\sigma(\omega_{j})-), t-s-\sigma(\omega_{j}))], \ \  \PP_{v_{j}} \text{-a.s.}
\end{aligned} \vspace{-2pt}
\end{equation}
On the other hand, the stationarity of the policy $(\wdh\tau, \wdh Z)$ and the definition of the probability measure $\wdh \PP_{y}$ imply that for every $y\in \E$, we have \vspace{-2pt} \begin{align}\label{e3:case2_family}
\nonumber \EE^{ \PP_{y}} [\phi(\omega_j(\sigma(\omega_{j})-), t-s-\sigma(\omega_{j}))] & =   \EE^{ \PP_{y}} [\phi(\omega_0(\sigma(\omega_{0})-), t-s-\sigma(\omega_{0}))]\\[-2pt]
  & =    \EE^{ \wdh\PP_{y}} [\phi(\omega_0(\sigma(\omega_{0})-), t-s-\sigma(\omega_{0}))]. \vspace{-2pt}
\end{align}
Let $\wdh H_{2}: = \{ \wdh X(t-s) \in B,  t-s \ge \htau_{1}\}$. In addition, using $j=0$ in \eqref{lem4.4c-claim-new} yields 
\begin{align*} 
  & \EE^{\wdh \PP_{y}} [I_{\wdh H_{2}} | \wdh \F_{\htau_{1}-}]  =  \EE^{\wdh \PP_{y}} [I_{\{ \wdh X(t-s) \in B,  t-s \ge \htau_{1}\}} | \wdh \F_{\htau_{1}-}] =  \phi(\omega_0(\sigma(\omega_{0})-), t-s-\sigma(\omega_{0})),  \end{align*} $ \wdh\PP_{y}$-{a.s.} 
Using this representation in \eqref{e3:case2_family} gives 
\begin{equation}
\label{e4:case2_family}
 \begin{aligned} 
   \EE^{ \PP_{y}} [\phi(\omega_j(\sigma(\omega_{j})-), t-s-\sigma(\omega_{j}))] & =    \EE^{ \wdh\PP_{y}}[ \EE^{\wdh \PP_{y}} [I_{\wdh H_{2}} | \wdh \F_{\htau_{1}-}]]=  \wdh\PP_{y} (\wdh H_{2}). 
\end{aligned}
\end{equation}
 In view of \eqref{e2:case2_family} and \eqref{e4:case2_family}, detailed arguments similar to those in Case (i) reveal that 
 \begin{displaymath}
 \EE^{\PP_{v_{j}}}\big[\phi([\theta_{u}\omega_j](\sigma(\theta_{u}\omega_{j})-), t-s-\sigma(\theta_{u} \omega_{j}))| \F_{u}^{(j)}\big] = \wdh\PP_{\wdh X(s)}(\wdh H_{2}), \ \ \PP_{v_{j}}\text{-a.s.}   
\end{displaymath} 
Now we use this observation in \eqref{e1:case2_family} to obtain 
\begin{align}  \label{eq:case2_family}
\nonumber  &  \wdh\PP_{x}(\{ \htau_{j} \le s < \htau_{j+1}\} \cap A \cap \{\wdh X_{t} \in B \} \cap \{\htau_{j+1} \leq t\}) \\[-2pt]
\nonumber   & \ =   \int_{F_{j-1}} \int_{\E} \EE^{\PP_{v_{j}}} \!\Big[ I_{G_{j}}(\omega_{j}) \wdh\PP_{\wdh X(s)}  (\wdh H_{2})\!\Big]Q(\omega_{j-1}, dv_{j}) \wdh\PP^{(j-1)}_{x} (d\mbf e_{j-1})  \\[-2pt]
\nonumber  & \ =     \int_{F_{j-1}} \int_{\Omega_{j}}  I_{G_{j}}(\omega_{j}) \wdh \PP_{\wdh X(s)}  (\wdh H_{2}) P(\omega_{j-1}, d\omega_{j}) \wdh\PP^{(j-1)}_{x} (d\mbf e_{j-1})\\[-2pt]
  & \ =  \EE^{\wdh\PP_{x}}\Big[  I_{A}I_{\{ \htau_{j} \le s < \htau_{j+1}\}}  \wdh \PP_{\wdh X(s)} \{\wdh X(t-s) \in B,  t-s\ge \htau_{1}  \}\Big].
\end{align}

Finally a combination of \eqref{eq:case1_family} and \eqref{eq:case2_family} leads to 
\begin{displaymath}
 \wdh\PP_{x}(\{ \htau_{j} \le s < \htau_{j+1}\} \cap A \cap \{\wdh X_{t} \in B \}  )=    \EE^{\wdh\PP_{x}}\Big[  I_{A}I_{\{ \htau_{j} \le s < \htau_{j+1}\}}  \wdh \PP_{\wdh X(s)} \{\wdh X(t-s) \in B   \}\Big].
\end{displaymath}
Sum over $j\in \NN_{0}$ to obtain \eqref{eq:Markov_family}, completing the proof. 
\comment{
\begin{equation} \label{e:hat-markov-family}
\wdh\PP^{(\htau,\wdh{Z})}_x(\wdh{X}(t)\in \Gamma,    \wdh{X}(s) \in B) = \int_{B}\wdh\PP^{(\htau,\wdh{Z})}_y(\wdh{X}(t-s)\in \Gamma)  \wdh\PP^{(\htau,\wdh{Z})}_x\{ \wdh{X}(s)  \in d y\}.
\end{equation}  
To this end, we write \begin{equation}
\label{e:I+II}
\begin{aligned} 
   \lefteqn{\wdh\PP^{(\wdh\tau, \wdh Z)}_x(\wdh X(s)\in B, \wdh X(t)\in \Gamma)}  \\
  & \ = \sum_{j=0}^\infty  \wdh\PP^{(\htau,\wdh{Z})}_x(\wdh{X}(s)\in B, \wdh{X}(t)\in \Gamma, \htau_j \leq s <  t < \htau_{j+1}) \\ 
& \quad  \ +\; \sum_{j=0}^\infty \sum_{k=1}^\infty \wdh\PP^{(\htau,\wdh{Z})}_x(\wdh{X}(s)\in B, \wdh{X}(t)\in \Gamma, \htau_j \leq s <  \htau_{j+1}, \htau_{j+k} \leq t < \htau_{j+k+1})\\
& \ =: (I) + (II).
\end{aligned}
\end{equation} We will deal with (I) and (II) separately and then combine the results. For notational simplicity, in the rest of the proof  we denote $u= u(\mbf e_{j-1}) = s- \htau_{j}(\mbf e_{j-1}) $ on the set $\{ \htau_j \leq s <   \htau_{j+1} \}$.

We first make a couple of observations. Since $(\tau,Z)$ is a stationary Markov policy, for each $k\in \NN_0$, the transition kernel $P_{k+1}:\Omega_k \times \F_{k+1}\to [0, 1]$ is 
\begin{equation} \label{stat-markov-trans-fn}
P_{k+1}(\omega_k,G) = \int \PP_{v_{k+1}}(G)\, Q_{(\omega_k(\sigma(\omega_k)-),\mathfrak{z}(\omega(\sigma(\omega_k)-)))}(dv_{k+1}), \qquad G\in \F_{k+1};
\end{equation}
in this we have used the fact that $(\tau,Z)$ is a Markov policy so that $P_{k+1}$ only depends on $\omega_k\in \Omega_k$.  More specifically, as a stationary policy, this dependence is on the location $\omega_k(\sigma(\omega_k)-)$, which is the same as $[\theta_u(\omega_k)](\sigma(\theta_u(\omega_k)-)$.  Also recall that for each $n\in \NN$, $\wdh\PP^{(n)}_x$ denotes the projection of $\wdh\PP^{(\htau,\wdh{Z})}_x$ on $(\prod_{i=0}^n \Omega_i,\otimes_{i=0}^n \F_i)$ and that the relation between $\wdh\PP^{(n)}_x$ and $\wdh\PP^{(j-1)}_x$ is given in \eqref{Pn-Pj-relation} for $n\geq j$.  

 Next we observe that
\begin{eqnarray} \label{Xs-dist} \nonumber
\wdh\PP^{(\htau,\wdh{Z})}_x(\wdh{X}(s)\in B) &=& \sum_{j=0}^\infty \wdh\PP^{(\htau,\wdh{Z})}_x(\wdh{X}(s)\in B, \htau_j \leq s < \htau_{j+1}) \\
&=& \sum_{j=0}^\infty \int_{\{\htau_j \leq s < \htau_{j+1}\} \cap \{\wdh{X}(s)\in B\}} \wdh\PP^{(\htau,\wdh{Z})}_x(d\wdt\omega).
\end{eqnarray}
Also it follows from  \lemref{sigma-term-time-lemma} and \defref{stationary-Markov-nominal-impulse-policy} that for any $k\in \NN$, we have $$\htau_{j+k} (\wdt\omega)= s + \htau_{j+1}(\wdt\omega) -s  + \sum_{l=1}^{k-1} \sigma(\omega_{j+l}) = s + \sigma(\omega_{j} )- u(\mbf e_{j-1}) +  \sum_{l=1}^{k-1} \sigma(\omega_{j+l}) .$$ 
Here we use the convention that $\sum_{l=1}^{0} a_{l} =0$ for any sequence $\{a_{l}\}$.  Next we use $\sigma(\omega_{j}) > u(\mbf e_{j-1})$ to write $\sigma(\omega_{j} )- u(\mbf e_{j-1})  = \sigma\circ\theta_{u(\mbf e_{j-1})}(\omega_{j})$ and hence for any $k\in \mathbb N$  \begin{align} \label{e:htau_j+k}
 \htau_{j+k} (\wdt\omega) & = s + \sigma\circ\theta_{u(\mbf e_{j-1})}(\omega_{j}) +    \sum_{l=1}^{k-1} \sigma(\omega_{j+l})= s + \htau_{k}\circ\wdh\theta_{s}(\wdt\omega).
\end{align} 
From the first equality of \eqref{e:htau_j+k}, we see that $\htau_{j+k}$ depends only on $(\theta_u(\omega_j),\omega_{j+1},\ldots,\omega_{j+k-1})$.  

Now we consider the  terms in (I).  When $j=0$, $\htau_j=0$ and the argument below simplifies since there is no transition kernel $P_0$; it is left to the reader.  Arbitrarily pick $j\in \NN$.  Let $u=u(\mbf{e}_{j-1})=s-\htau_j(\mbf{e}_{j-1})$, $v=v(\mbf{e}_{j-1})=t-\htau_j(\mbf{e}_{j-1})$ and note that $v-u = t-s$.  We also use the shorter notation $(\wdh{Y}(\omega_{j-1}),\wdh{Z}(\omega_{j-1})) = (\omega_{j-1}(\wdh\sigma_j-),\mathfrak{z}(\wdh\sigma_j-))$.  Then using \eqref{stat-markov-trans-fn} 
\setlength{\arraycolsep}{0.5 mm}
\begin{eqnarray} \nonumber \label{e:4.6}
\lefteqn{\wdh\PP^{(j)}_x(\wdh{X}(s)\in  B, \wdh{X}(t)\in  \Gamma, \htau_j \leq s <  t < \htau_{j+1})} \\ \nonumber 
&=& \int_{\{\htau_j \leq s\}} P_j(\omega_{j-1},\{\omega_j(u)\in  B, \omega_j(v)\in  \Gamma, \sigma(\omega_j)>v\}) )\, \wdh\PP^{(j-1)}_x(d\mbf{e}_{j-1}) \\ \nonumber 
&=& \int_{\{\htau_j \leq s\}} \int_{\mathcal E} \int_{\Omega_j} I_{\{\omega_j(u)\in  B, \omega_j(v)\in  \Gamma, \sigma(\omega_j)>v\}}\, \PP_{v_j}(d\omega_j)\, Q_{(\wdh{Y}(\omega_{j-1}),\wdh{Z}(\omega_{j-1}))}(dv_j)\, \wdh\PP^{(j-1)}_x(d\mbf{e}_{j-1}) \\ \nonumber 
&=& \int_{\{\htau_j \leq s\}} \int_{\E} \int_{\Omega_j} \int_{ B} \PP_{v_j}(\omega_j(v)\in  \Gamma, \sigma(\omega_j)>v | \omega_j(u)=y)\, \PP_{v_j}(\omega_j(u)\in dy) \\ \nonumber 
& & \rule{2.5in}{0pt}  \, \PP_{v_j}(d\omega_j)\, Q_{(\wdh{Y}(\omega_{j-1}),\wdh{Z}(\omega_{j-1}))}(dv_j)\, \wdh\PP^{(j-1)}_x(d\mbf{e}_{j-1}) \\ \nonumber 
&=& \int_{\{\htau_j \leq s\}} \int_{\E} \int_{\{\sigma(\omega_{j}) > u\}} \int_{ B} \PP_{v_j}(\omega_j(v)\in  \Gamma, \sigma(\omega_j)>v | \omega_j(u)=y)\, \PP_{v_j}(\omega_j(u)\in dy) \\ \nonumber 
& & \rule{2.5in}{0pt}  \, \PP_{v_j}(d\omega_j)\, Q_{(\wdh{Y}(\omega_{j-1}),\wdh{Z}(\omega_{j-1}))}(dv_j)\, \wdh\PP^{(j-1)}_x(d\mbf{e}_{j-1}) \\ \nonumber 
& & +\; \int_{\{\htau_j \leq s\}} \int_{\E} \int_{\{\sigma(\omega_j) \leq u\}} \int_{ B} \PP_{v_j}(\omega_j(v)\in  \Gamma, \sigma(\omega_j)>v | \omega_j(u)=y)\, \PP_{v_j}(\omega_j(u)\in dy) \\ \nonumber 
& & \rule{2.5in}{0pt}  \PP_{v_j}(d\omega_j)\, Q_{(\wdh{Y}(\omega_{j-1}),\wdh{Z}(\omega_{j-1}))}(dv_j)\, \wdh\PP^{(j-1)}_x(d\mbf{e}_{j-1}) \\ \nonumber 
&=& \int_{\{\htau_j \leq s < \htau_{j+1}\}} \int_{ B} \wdh\PP^{(\htau,\wdh{Z})}_y(\wdh{X}(t-s)\in  \Gamma, \htau_0\leq t-s < \htau_1)\, \wdh\PP^{(\htau,\wdh{Z})}_x(\wdh{X}(s)\in dy)\, \wdh\PP^{(j)}_x(d\mbf{e}_j) \\ \label{j-id}
&=& \int_{\{\htau_j \leq s < \htau_{j+1}\}\cap\{\wdh{X}(s)\in B\}} \wdh\PP^{(\htau,\wdh{Z})}_y(\wdh{X}(t-s)\in \Gamma, \htau_0 \leq t-s < \htau_1)\, \wdh\PP^{(\htau,\wdh{Z})}_x(d\wdt\omega);
\end{eqnarray}
the fifth equality holds due for two reasons.  Firstly, from the fact that since $(\htau,\wdh Z)$ is a stationary Markov policy, $\sigma(\omega_j)  > u$ implies $\sigma(\omega_j) = u + \sigma\circ\theta_u(\omega_j)$ so the Markov family property \eqref{markov-family} of the coordinate space $\Omega_j$ under $\PP_{v_j}$ accounts for the shift $\theta_u$ in the first integration. Also, the second integration is $0$ since it equals $\PP_{v_j}(\omega_j(u)\in B, v < \sigma(\omega_j) \leq u)$.

Next we examine the terms in (II) of \eqref{e:I+II}.  Let $j\in \NN_0$, $k\in \NN$ be arbitrary and again set $u=s-\htau_j(\mbf{e}_{j-1})$.  We use the relation \eqref{Pn-Pj-relation} between $\PP^{(j+k)}_x$ and $\PP^{(j-1)}_x$ but introduce some simplifying notation.  For each $k\in \NN$ and $G\in \otimes_{i=j+1}^{j+k} \F_i$ define the notation 
\begin{equation} \label{mult-trans-kernel}
P^{j+1}_{j+k}(\omega_j,G) = \idotsint\limits_G P_{j+k}(\omega_{j+k-1},d\omega_{j+k})\cdots P_{j+1}(\omega_j,d\omega_{j+1}).
\end{equation}
Then 
\begin{eqnarray} \label{jk-id11} \nonumber 
\lefteqn{\wdh\PP^{(j+k)}_x(\wdh{X}(s)\in B, \wdh{X}(t)\in  \Gamma, \htau_j \leq s < \htau_{j+1}, \htau_{j+k} \leq t < \htau_{j+k+1})} \\ \nonumber 
&=& \int_{\{\htau_j \leq s\}} \int_{\Omega_{j}} I_{\{\omega_j(u)\in B,\sigma(\omega_j) > u\}} P^{j+1}_{j+k}(\omega_j,\{\omega_{j+k}(t-\htau_{j+k}) \in  \Gamma, \sigma(\omega_{j+k})>t-\htau_{j+k}\geq 0\}) \\ \nonumber 
& & \rule{3.25in}{0pt} P_j(\omega_{j-1},d\omega_j)\, \PP^{(j-1)}_x(d\mbf{e}_{j-1}) \\ \nonumber 
&=& \int_{\{\htau_j \leq s\}} \int_{\E} \int_{\{\omega_j(u)\in B,\sigma(\omega_j) > u\}} P^{j+1}_{j+k}(\omega_j,\{\omega_{j+k}(t-\htau_{j+k}) \in  \Gamma, \sigma(\omega_{j+k})>t-\htau_{j+k}\geq 0\}) \\ \nonumber 
& & \rule{2in}{0pt}  \PP_{v_j}(d\omega_j)\, Q_{(\wdh{Y}(\omega_{j-1}),\wdh{Z}(\omega_{j-1}))}(dv_{j})\, \PP^{(j-1)}_x(d\mbf{e}_{j-1}) \\ \nonumber
&=& \int_{\{\htau_j \leq s\}} \int_{\E} \int_{\{\omega_j(u)\in B,\sigma(\omega_j) > u\}} \\ \nonumber 
& & \int \EE^{\PP_{v_j}}\Big[P^{j+1}_{j+k}(\omega_j,\{\omega_{j+k}(t-\htau_{j+k}) \in  \Gamma, \sigma(\omega_{j+k})>t-\htau_{j+k}\geq 0\}) \big| \omega_j(u)=y\Big]\, \\ \nonumber 
& & \rule{1.5in}{0pt}  \PP_{v_j}(\omega_j(u)\in dy)\PP_{v_j}(d\omega_j)\, Q_{(\wdh{Y}(\omega_{j-1}),\wdh{Z}(\omega_{j-1}))}(dv_{j})\, \PP^{(j-1)}_x(d\mbf{e}_{j-1}) \\ \nonumber 
&=& \int_{\{\htau_j \leq s\}} \int \int_{\{\omega_j(u)\in B,\sigma(\omega_j) > u\}} \\ \nonumber 
& & \int \EE^{\PP_{y}}\Big[P^{j+1}_{j+k}(\theta_u(\omega_j),\{\omega_{j+k}(t-\htau_{j+k}) \in  \Gamma, \sigma(\omega_{j+k})>t-\htau_{j+k}\geq 0\})\Big]\, \PP_{v_j}(\omega_j(u)\in dy) \\ 
& & \rule{2in}{0pt} \PP_{v_j}(d\omega_j)\, Q_{(\wdh{Y}(\omega_{j-1}),\wdh{Z}(\omega_{j-1}))}(dv_{j})\, \PP^{(j-1)}_x(d\mbf{e}_{j-1});
\end{eqnarray}
the last equality follows from the Markov family property \eqref{markov-family} of $\omega_j$ under $\PP_{v_j}$ and the fact that the $\omega_j$-dependence of $P^{j+1}_{j+k}$ is through $P_{j+1}$, which itself only depends on $\omega_j(\sigma(\omega_j-))= [\theta_u(\omega_j)](\sigma(\theta_u(\omega_j))-)$.  
Recall $\wdt\omega^{(s)}=\wdh\theta_s(\wdt\omega)=(\theta_u(\omega_j),\omega_{j+1},\omega_{j+2},\ldots)=:(\omega^{(s)}_{[0]},\omega^{(s)}_{[1]},\omega^{(s)}_{[2]},\ldots)$ so the expectation above represents the probability related to the process $\wdh{X}(\cdot,\wdt\omega^{(s)})$ under $(\htau,\wdh{Z})$ starting at $y$ at time $0$. Each transition kernel \eqref{stat-markov-trans-fn} in \eqref{mult-trans-kernel} only depends on the location of the impulse and each component space is $\Omega$.  In addition, it follows from the definitions of $\tau^{(s)}_k$ and $\htau^{(s)}_k$ as well as \propref{stationary-prop} that $\htau^{(s)}_k(\wdt\omega) = \htau_{j+k}(\wdt\omega)-s$.  Then \corref{stationary-corollary} establishes for each $j\in \NN_0$, 
\begin{align*}
& P^{j+1}_{j+k}(\theta_u(\omega_j),\{\omega_{j+k}(t-\htau_{j+k}) \in  \Gamma, \sigma(\omega_{j+k})>t-\htau_{j+k}\geq 0\}) \\
& =\; P^{1}_{k}(\omega^{(s)}_{[0]},\{\omega^{(s)}_{[k]}(t-s-\htau_k^{(s)}) \in  \Gamma, \sigma(\omega^{(s)}_{[k]})>t-s-\htau_k^{(s)}\geq 0\}). 
\end{align*}
Hence for each $j\in \NN_0$, 
\begin{align*}
& \EE^{\PP_{y}}\Big[P^{j+1}_{j+k}(\theta_u(\omega_j),\{\omega_{j+k}(t-\htau_{j+k}) \in  \Gamma, \sigma(\omega_{j+k})>t-\htau_{j+k}\geq 0\})\Big] \\
& =\; \EE^{\PP_{y}}\Big[P^{1}_{k}(\omega^{(s)}_{[0]},\{\omega^{(s)}_{[k]}(t-s-\htau_k^{(s)}) \in  \Gamma, \sigma(\omega^{(s)}_{[k]})>t-s-\htau_k^{(s)}\geq 0\})\Big] \\
& =\; \wdh \PP^{(\htau,\wdh{Z})}_y(\{\wdt\omega^{(s)}\in \wdt\Omega: \wdh X(t-s,\wdt\omega^{(s)}) \in \Gamma,  \htau_{k}(\wdt\omega^{(s)}) \le t-s < \htau_{k+1}(\wdt\omega^{(s)})\}) \\
& =\; \wdh\PP^{(\htau,\wdh{Z})}_y(\wdh{X}(t-s)\in \Gamma, \htau_{k} \leq t-s < \htau_{k+1}).
\end{align*}
Using this representation in \eqref{jk-id11} gives
\begin{align} \nonumber 
& \wdh\PP^{(j+k)}_x(\wdh{X}(s)\in B, \wdh{X}(t)\in  \Gamma, \htau_j \leq s < \htau_{j+1}, \htau_{j+k} \leq t < \htau_{j+k+1}) \\   \nonumber
& \ \ = \int_{\{\htau_j \leq s < \htau_{j+1}\}} \int_{B} \wdh\PP^{(\htau,\wdh{Z})}_y(\wdh{X}(t-s)\in \Gamma, \htau_{k} \leq t-s < \htau_{k+1})\, \wdh\PP^{(\htau,\wdh{Z})}_x(\wdh{X}(s)\in dy)\, \wdh\PP^{(\htau,\wdh{Z})}_x(d\mbf{e}_j) \\
& \ \ = \int_{\{\htau_j \leq s < \htau_{j+1}\}\cap \{\wdh{X}(s)\in B\}} \wdh\PP^{(\htau,\wdh{Z})}_y(\wdh{X}(t-s),\htau_k \leq t-s < \htau_{k+1})\, \wdh\PP^{(\htau,\wdh{Z})}_x(d\wdt\omega)
\end{align} 
and summing over $k$ gives us
\begin{align} \nonumber \label{jk-id2}
  & \sum_{k=1}^\infty \wdh\PP^{(\htau,\wdh{Z})}_x(\wdh{X}(s)\in B, \wdh{X}(t)\in \Gamma, \htau_j \leq s <  \htau_{j+1}, \htau_{j+k} \leq t < \htau_{j+k+1})   \\
  & \ = \int_{\{\htau_j \leq s < \htau_{j+1}\}\cap \{\wdh{X}(s)\in B\}} \wdh\PP^{(\htau,\wdh{Z})}_y(\wdh{X}(t-s)\in \Gamma, \htau_{1} \leq t-s )\, \wdh\PP^{(\htau,\wdh{Z})}_x(d\wdt\omega). \\ \nonumber
\end{align}

Finally, use \eqref{j-id} and \eqref{jk-id2} in \eqref{e:I+II} along with \eqref{Xs-dist} to get 
\begin{align*}
\lefteqn{\wdh\PP^{(\wdh\tau, \wdh Z)}_x(\wdh X(s)\in B, \wdh X(t)\in \Gamma)} \\ \nonumber 
&= \sum_{j=0}^\infty \int_{\{\htau_j \leq s < \htau_{j+1}\}\cap \{\wdh{X}(s)\in B\}} \wdh\PP^{(\htau,\wdh{Z})}_y(\wdh{X}(t-s)\in \Gamma, \htau_0\leq t-s < \htau_1)\, \wdh\PP^{(\htau,\wdh{Z})}_x(d\wdt\omega) \\
& \  +\; \sum_{j=0}^\infty  \int_{\{\htau_j \leq s < \htau_{j+1}\}\cap \{\wdh{X}(s)\in B\}} \wdh\PP^{(\htau,\wdh{Z})}_y(\wdh{X}(t-s)\in \Gamma, \htau_{1} \leq t-s ) \, \wdh\PP^{(\htau,\wdh{Z})}_x(d\wdt\omega) \\ 
&= \sum_{j=0}^\infty \int_{\{\htau_j \leq s < \htau_{j+1}\}\cap \{\wdh{X}(s)\in B\}} \wdh\PP^{(\htau,\wdh{Z})}_y(\wdh{X}(t-s)\in \Gamma)\, \wdh\PP^{(\htau,\wdh{Z})}_x(d\wdt\omega) \\ 
&= \int_{B} \wdh\PP^{(\htau,\wdh{Z})}_y(\wdh{X}(t-s)\in \Gamma)\, \wdh\PP^{(\htau,\wdh{Z})}_x(\wdh{X}(s)\in dy). 
\end{align*}
This gives \eqref{e:hat-markov-family}, establishing the result.}
\end{proof}

  Finally, we verify  that $(\check\Omega,\check\F, X,\{\check\F_t\},\{\PP^{(\tau,Z)}_x, x\in \E\})$ inherits the Markov family property from $(\wdt\Omega,\wdt{\G},\wdh{X},\{\wdh{\F}_t\},\{\wdh\PP^{(\htau,\wdh{Z})}_x, x\in \E\})$. 

\begin{thm}\label{lem-family-equivalence} 
For a stationary Markov nominal impulse policy $(\tau, Z)$, let $X$ denote the coordinate process on $\check\Omega$ and $\{\check\F_t\}$ be the natural filtration generated by $X$, including $\check\F_{0-}$.  For each $x\in \E$, let $\PP^{(\tau,Z)}_x$ be the measure on $(\check\Omega,\check\F)$ given by   Theorem \ref{prob-construction}.  Then $(\check\Omega,\check\F,X, \{\check\F_t\},\{\PP^{(\tau,Z)}_x: x\in \E\})$ is a Markov family.


\end{thm}

\begin{proof} Similar to the beginning of the proof of Theorem \ref{Markov-family-thm}, we only need to prove for any $0\le s < t$  and $B\in \B(\E)$, we have   \begin{equation}
\label{e:markov-family-Xhat} 
\PP^{(\tau,Z)}_x(X(t)\in B | \check\F_{s}) = \PP^{(\tau,Z)}_{X(s)} (X(t-s) \in B),\ \quad \PP_{x}^{(\tau,Z)}\text{-a.s.}
\end{equation}
 First we establish that $\wdh{X}^{-1}(\sigma(X(s)) = \sigma(\wdh{X}(s))$ and $\wdh{X}^{-1}( \check \F_{s} )= \wdh \F_{s}$ for any $s\ge 0$.  Since $\sigma(X(s))$ consists of sets of the form $\{X(s)\in A\}$ for $A\in {\cal B}(\E)$ and $X$ is the coordinate process on $\check\Omega$, $\{X(s) \in A\} = \{\check\omega\in \check\Omega: \check\omega(s)\in A\}$.  As a result, $\wdh{X}^{-1}(\{X(s)\in A\})= \wdh{X}^{-1}(\{\check\omega:\check\omega(s)\in A\}) = \{\wdt\omega\in \wdt\Omega: \wdh{X}(s,\wdt\omega)\in A\}.$  Similarly, by considering sets of the form $\{X(t_{1}) \in A_{1}, \dots, X(t_{n}) \in A_{n} \}$, in which $n\in \NN$ and $0\le t_{1} <  \dots <t_{n} \le s$, we obtain  $\wdh{X}^{-1}( \check \F_{s} )= \wdh \F_{s}$.
 
 By the construction of $\PP_{\cdot}^{(\tau, Z)}$ in Theorem \ref{prob-construction}, we have \begin{equation}
\label{e1:lem4.9}
\PP_{y}^{(\tau, Z)} \{X(t-s) \in B \} =\wdh \PP_{y}^{(\wdh\tau,\wdh Z)} \{\wdh X(t-s) \in B \}, \quad \forall y\in \E.
\end{equation} Let $g$ be a Borel measurable function so that $g(y) = \wdh \PP_{y}^{(\wdh\tau,\wdh Z)} \{\wdh X(t-s) \in B \}$ for $ \wdh \PP_{x}^{(\wdh\tau,\wdh Z)}\wdh X(s)^{-1}$-a.e. $y\in \E$. We have $$g(\wdh X(s)) = \wdh\PP_{\wdh X(s)}^{(\wdh \tau,\wdh  Z)} \{\wdh X(t-s) \in B \} ,  \ \ \wdh \PP_{x}^{(\wdh\tau,\wdh Z)} \text{-a.s.} $$ On the other hand, since   $ \wdh \PP_{x}^{(\wdh\tau,\wdh Z)}\wdh X(s)^{-1}= \PP_{x}^{(\tau, Z)} X(s)^{-1} $, it follows from \eqref{e1:lem4.9} that  $ 
g(y)  
=  \PP_{y}^{(\tau, Z)} \{X(t-s) \in B \}$  for $ \PP_{x}^{(\tau, Z)} X(s)^{-1}  \text{ a.e. } y\in \E$. 
 Therefore we have  
 $$g(X(s)) = \PP_{ X(s)}^{( \tau,  Z)} \{ X(t-s) \in B \}, \ \ \PP_{x}^{(\tau, Z)}\text{-a.s.}$$ Note that $g(X(s))$ is $\sigma(X(s))$ and $\check \F_{s}$-measurable. 

 For any $F\in \check \F_{s}$, $\wdh X^{-1}(F) \in \wdh\F_{s}$. 
  Then  we can use \eqref{markov-family} to derive
\begin{align*} 
 \PP_{x}^{(\tau, Z)} \{ (X(t) \in B)\cap F \} & = \wdh\PP_{x}^{(\wdh\tau,\wdh Z)} \{ (\wdh X(t) \in B)\cap \wdh X^{-1}(F) \}   \\ &  =\EE^{ \wdh\PP_{x}^{(\wdh\tau, \wdh Z)}} \big [ g(\wdh X(s))  I_{\wdh X^{-1}(F) } \big] 
  = \EE^{  \PP_{x}^{( \tau,  Z)}} \big [ g( X(s) )I_{  F } \big]. 
\end{align*} Since $F\in \check \F_{s}$ is an arbitrary set, we have $ \PP_{x}^{(\tau, Z)} \{ (X(t) \in B) | \check \F_{s}\} =  g( X(s) ) =   \PP_{ X(s)}^{( \tau,  Z)} \{ X(t-s) \in B \} $, $ \PP_{x}^{(\tau, Z)}$-a.s. This establishes \eqref{e:markov-family-Xhat}. 
\end{proof}

\comment{
\begin{thm}  \label{Markov-prob-construction}
Let $(\tau,Z)$ be a Markov nominal impulse policy and $\nu \in {\cal P}(\E)$. Let $\{\PP^{(\tau,Z)}_x: x\in \E\}$ be the family of probability measures on $(\check\Omega,\check\F)$ given by \thmref{prob-construction} and $\PP^{(\tau,Z)}_\nu = \int \PP^{(\tau,Z)}_x\, \nu(dx)$.  Then the coordinate process $X$ on $(\check\Omega,\check\F,\PP^{(\tau,Z)}_\nu)$ is a Markov process relative to its natural filtration $\{\check\F_t\}$.
\end{thm}

\begin{proof}
The aim is to prove that for each $0\leq s \leq t$ and $B\in {\cal B}(\E)$, 

\begin{equation}
\label{e:X-Markov}
\PP^{(\tau,Z)}_\nu(X(t)\in B|\check\F_s) = \PP^{(\tau,Z)}_\nu(X(t)\in B | X(s)).
\end{equation}

First we establish that $\wdh{X}^{-1}(\sigma(X(s)) = \sigma(\wdh{X}(s))$.  Since $\sigma(X(s))$ consists of sets of the form $\{X(s)\in A\}$ for $A\in {\cal B}(\E)$ and $X$ is the coordinate process on $\check\Omega$, $\{X(s) \in A\} = \{\check\omega\in \check\Omega: \check\omega(s)\in A\}$.  As a result, $\wdh{X}^{-1}(\{X(s)\in A\})= \wdh{X}^{-1}(\{\check\omega:\check\omega(s)\in A\}) = \{\wdt\omega\in \wdt\Omega: \wdh{X}(s,\wdt\omega)\in A\}.$

Next, we wish to establish a connection between the conditional probabilities with respect to $X(s)$ on $\check\Omega$ and with respect to $\wdh{X}(s)$ on $\wdt\Omega$.  To simplify notation, define the random variable $L$ on $\check\Omega$ by $L=\PP^{(\tau,Z)}_\nu(X(t)\in B | X(s))$.  Now lift $L$ to the space $\wdt\Omega$ by defining
$\wdh{L}(\wdt\omega) = L(\wdh{X}(\wdt\omega)), \ \wdt\omega\in \wdt\Omega.$
We claim that $\wdh{L}$ is a version of $\wdh\PP^{(\htau,\wdh{Z})}_\nu(\wdh{X}(t)\in B | \wdh{X}(s))$.  To see this, first note that $\wdh{L}$ is $\sigma(\wdh{X}(s))$-measurable and integrable.  Now let $A\in {\cal B}(\E)$ and consider the set $\wdh{G} = \{\wdh{X}(s)\in A\}$.  Using the fact that $G=\{X(s)\in A\}$ satisfies $\wdh{X}^{-1}(G) = \wdh{G}$, we have \vspace{-4pt}
\begin{align*}
\int_{\wdh{G}} \wdh{L}(\wdt\omega)\, \wdh{\PP}^{(\htau,\wdh{Z})}_\nu(d\wdt\omega) & = \int_G L(\check\omega)\, \PP^{(\tau,Z)}_\nu(d\check\omega)  \\[-2pt] 
&  = \PP^{(\tau,Z)}_\nu(\{X(t)\in B\}\cap G) 
= \wdh{\PP}^{(\htau,\wdh{Z})}_\nu(\{\wdh{X}(t)\in B\} \cap \wdh{G}),
\end{align*}
establishing the claim.

Finally, we turn to the Markov property on the  space $(\check\Omega,\check\F,\{\check\F_t\},\PP^{(\tau,Z)}_\nu)$. 
   Let $s,t,B\in \B(\E)$ and $L,\wdh L$ as before.  Let $F\in \check\F_{s}$. Note that $\wdh X^{-1}(F) \in \wdh \F_{s}$ and $\wdh X^{-1}\{X(t) \in B \} = \{\wdh X(t) \in B \} $. Also recall that $\PP_{\nu}^{(\tau, Z)}  = {\wdh \PP}_{\nu}^{(\htau,\wdh{Z})} \wdh X^{-1}$. Then it follows that \vspace{-2pt}
\begin{align*} 
 & \PP_{\nu}^{(\tau, Z)}(\{X(t)\in B \}\cap F ) =  {\wdh \PP}_{\nu}^{(\htau,\wdh{Z})}( \wdh X^{-1} \{X(t)\in B \}\cap \wdh X^{-1} (F))  \\[-2pt] & \ =    {\wdh \PP}_{\nu}^{(\htau,\wdh{Z})}(  \{ \wdh X  (t)\in B \}\cap \wdh X^{-1} (F)) =\EE^{{\wdh \PP}_{\nu}^{(\htau,\wdh{Z})}}\big[{\wdh \PP}_{\nu}^{(\htau,\wdh{Z})}\{ \wdh X  (t)\in B |\wdh X(s) \} I_{  \wdh X^{-1} (F)} \big] \\[-2pt]&\  = \EE^{{\wdh \PP}_{\nu}^{(\htau,\wdh{Z})}}\big[ \wdh L\cdot   I_{  F}( \wdh X)\big] = \EE^{\PP_{\nu}^{(\tau, Z)}}[ L\cdot I_{F}], 
\vspace{-3pt}\end{align*} 
 where the third equality follows from the Markov property of $\wdh X$ with respect to $\wdh\PP_{\nu}^{(\htau,\wdh{Z})}$ established in Theorem  \ref{markov-property}. This shows that $L$ is a version of $ \PP_{\nu}^{(\tau, Z)}\{\{X(t)\in B \}| \check\F_{s} \}$.   
\end{proof}


\section{Stationary Markov Nominal Impulse Policies}\label{sect-Markov-family}
We begin by identifying a subclass of Markov nominal impulse policies which yield a Markov family.  Recall that for a Markov nominal impulse policy, $\sigma_k$ satisfies $\tau_k = \tau_{k-1} + \sigma_k\circ\theta_{\tau_{k-1}}$ on $\{\tau_{k-1} < \infty\}$ for $k\in \NN$.  The intuition underlying a stationary policy is that the same policy should be applied going forward from each time $s\geq 0$ (as established in \propref{stationary-prop} below).  Thus, intuitively, all pairs $(\sigma_k,\mathfrak{z}_k)$ in \defref{Markov-nominal-impulse-policy} should be a common pair $(\sigma,\mathfrak{z})$.  Recall that $\sigma_1$ is defined on $\check\Omega$ while, for each $k\geq 2$, $\sigma_k$ is defined on $\Omega$.  This difference is adopted to allow $\tau_1=0$ based on the initial position $X(0-)$.  On the set $\{\sigma_1 > 0\}$, the stationary policy should therefore be determined by $(\sigma,\mathfrak{z})$.  Recall, the projection operator $\wdt{T}: \check\Omega \to \Omega$ has been defined so that for each $\check\omega=(\check\omega(0-),\check\omega(\cdot))$, $\wdt{T}(\check\omega) = \check\omega(\cdot)$; see the second intervention in the proof of \thmref{prob-construction}. 

\begin{defn}[Stationary Markov Nominal Impulse Policy] \label{stationary-Markov-nominal-impulse-policy}
A stationary Markov nominal impulse policy is a Markov nominal impulse policy $(\tau,Z)$ for which there exist measurable functions $\sigma: \Omega \rightarrow (0,\infty]$ and $\mathfrak{z}:\E\rightarrow \Z$ such that for each $k\in \NN$, on the event $\{\tau_{k-1}<\infty\}$: (a) $\sigma_1=\sigma\circ\wdt{T}$ on the set $\{\tau_1 > 0\}$ and $\sigma_k = \sigma$ for $k\geq 2$; and (b) $Z_k = \mathfrak{z}(X(\tau_k-))$.
\end{defn}}

\section{Example}\label{sect-example}
A protypical example of an independent-cycles nominal impulse policy is an $(s,S)$ ordering policy in the inventory management of a single product, subject to some technical considerations specified below.  A more general example of such a policy in $\RR^n$ includes defining each intervention time as the hitting time of a fixed closed set $C\subset\RR^n$, subject to similar technical considerations, and then using a fixed distribution $Q$ for every intervention or different fixed distributions $\{Q_{k+1}\}$ for the different interventions.  For restricted capacity inventory control of multiple products, $C$ could be the boundary of the first orthant and $Q$ could be the Dirac measure on the maximal inventory capacity for each product.  This policy implies that an order is placed at the first time that one product's stock is depleted at which point every product is restocked to its full storage capacity.  A variant of this policy would allow each product to be fully depleted (with no back-ordering allowed) and then place an order with the goal of restoring each product to full capacity but allowing for random effects in the actual amounts delivered.

This brief section verifies that the $(s,S)$ ordering policy for a single product is both a stationary Markov policy and an independent-cycles policy.  The policy intervenes when the process hits a point and, at that instant, the process jumps to a new position specified by a random effects distribution.  Intuitively, the controlled process is continuous except at these times of intervention.  The challenge in defining the policy is that it must be defined for all paths $\check\omega\in \check\Omega$, not just those with specific discontinuites.  It is then the measure $\PP^{(\tau,Z)}_x$ which consigns all paths with discontinuities other than at the intervention times to a set of probability $0$.

Let $\E=\RR$ and fix $y\in \RR$.  The control decision is to select the target location $z > y$ to which the controlled process aims to jump.  Consider a fixed $z > y$.  The random effects distribution $Q_{(y,z)}$ is a distribution on $(y,z]$.  We assume that there is some $y_1$ with $y<y_1<z$ such that $Q_{(y,z)}([y_1,z])=1$; that is, $Q_{(y,z)}$ has its support in $(y,z]$.  To be definite, we set $y_1$ to be the maximal value for which the support of $Q_{(y,z)}$ is in $[y_1,z]$.  Define for all $\check\omega \in \check\Omega$  and $k \in \NN$,  $Z_k(\check\omega)    = z$, and the first intervention time by
\vspace{-1.3pt}
\begin{equation} \label{sS-tau1-def} \vspace{-2pt}
\tau_1(\check\omega) := \inf\{t\geq 0: \check\omega(t-)\leq y, \check\omega(s) > y \mbox{ for } 0 \leq s < t\}=: \sigma(\check\omega), \ \
\end{equation}
with $\inf\emptyset = \infty$.  For   $ \check\omega\in \check\Omega$  and $k\in \NN$, successively define the intervention times by $\tau_{k+1}(\check\omega) = \infty$ if $\check\omega(\tau_{k}) \notin [y_{1}, z]$; otherwise, put 
\begin{align*} \vspace{-1pt} 
\tau_{k+1}(\check\omega) & :=  \inf\{t \ge \tau_k(\check\omega): 
\check\omega(t-)=y, \check\omega(s) > y \mbox{ for } \tau_k(\check\omega) \leq s < t\} \\
& = \tau_{k}(\check\omega) + \sigma\circ\theta_{\tau_{k}}(\check\omega). 
\end{align*}
This policy $(\tau,Z)$ is an $(s,S)$ ordering policy with random supply for the inventory management of a single item.  It requires interventions to occur when the left continuous controlled process $\{X(t-): t\geq 0\}$ hits $y$ and precludes the process from entering the set $(-\infty,y]$ in a discontinuous manner.  In particular, observe that for $\check\omega \in \check\Omega$ with $\check\omega(0-) > y$, the initial intervention occurs at the hitting time of $y$ by $X(t-)$, which is the same rule as for all later interventions.  Thus the policy $(\tau,Z)$ satisfies the conditions of Definitions   \ref{independent-cycles-nominal-impulse-policy} and \ref{Markov-nominal-impulse-policy} as well as the additional conditions in \corref{iid-cycles}.  The facts that $\tau_k < \tau_{k+1}$ on $\{\tau_k < \infty\}$ and $\lim_{k\to\infty}\tau_{k} =\infty$ follow from the definition of $\tau_{k+1}$ and the regularity of the paths in $D_\E[0,\infty)$.  The only condition which is not immediate is that   $\tau_1$ is an $\{\check\F_{t-}\}$-stopping time and  $\tau_{k+1}$ is an $\{\F_{t-}\}$-stopping time for $k \geq 1$.  

We apply \thmref{cp65} to $\tau_1$; see \remref{cp65-tau1-mod} for the modification which covers $\tau_1$.  
 Observe that $\tau_1(\check\omega)$ is defined in terms of the behavior of the path $\check\omega$ and $\check\F = \B(\E) \otimes \F = \sigma(X(s): s\geq 0-)$ with $X$ being the coordinate process on $\check\Omega$, so $\tau_1$ is $\check\F$-measurable.  Next fix $t > 0$, let $\check\omega_1\in \{\tau_1 \leq t\}$ and suppose $\tau_1(\check\omega_1) = u \leq t$.    Let $\check\omega_2 \stackrel{R_t}{\sim} \check\omega_1$.  If $u = 0$, then $\check\omega_2(0-) = \check\omega_1(0-) \leq y$ and $\tau_1(\check\omega_2) = 0$.   Otherwise by this relation and \eqref{sS-tau1-def}, $\check\omega_2(u-) = \check\omega_1(u-) = y$ and $\check\omega_2(s) = \check\omega_1(s) > y$ for $s=0-$ and for all $0 \leq s < u$.  Again, by \eqref{sS-tau1-def}, $\tau_1(\check\omega_2) = u = \tau_1(\check\omega_1)$.  \thmref{cp65} therefore establishes that $\tau_1$ is an $\{\check\F_{t-}\}$-stopping time.  Combining induction with essentially the same argument establishes that $\tau_{k+1}$ is an $\{\F_{t-}\}$-stopping time for each $k\in \NN$.

\smallskip 

\end{document}